\newtheorem{thm}{Theorem}[section]
\newtheorem{lem}[thm]{Lemma}
\newtheorem{cor}[thm]{Corollary}
\newtheorem{rem}[thm]{Remark}
\theoremstyle{definition}
\newtheorem{definition}[thm]{Definition}
\def\sbmatrix{\left[\begin{array}}
\def\endsbmatrix{\end{array}\right]}
\def\rank{{\mathrm {rank}}}
\def\deg{{\rm deg}}
\def\rdet{{\rm rdet}}
\def\cdet{{\rm cdet}}
\def\Mdet{{\rm Mdet}}
\def\diag{{\rm diag}}
\numberwithin{equation}{section}
\begin{document}

\title{\bf The determinant of the Laplacian matrix of a quaternion unit
gain graph}

\author{Ivan I. Kyrchei$^{a}$\footnote{Corresponding author Email:ivankyrchei26@gmail.com},\qquad Eran Treister $^{b}$, \qquad Volodymyr O. Pelykh$^{a}$,\\
$^a$ Pidstryhach Institute for Applied Problems of Mechanics and Mathematics\\ of NAS of Ukraine, L'viv, Ukraine,\\
$^{b}$ Ben-Gurion University of the Negev, Beer-Sheva,  Israel
}

\date{}

\maketitle

\begin{abstract}
A quaternion unit gain graph is a graph where each orientation of an edge is given a quaternion unit, and the opposite orientation is assigned the inverse of  this quaternion unit. In this paper, we provide a combinatorial description of the determinant of the Laplacian matrix of a quaternion unit gain graph by using row-column noncommutative determinants recently introduced by one of the authors.
A numerical example is presented for illustrating our results.

\medskip
{\it Key words and phrases}: gain graph; Laplacian matrix;  incidence matrix; quaternion matrix; noncommutative determinant.

2020 {\it Mathematics subject classification\/}: 05C50, 05C22,
05C25, 15B33, 15B57, 15A15.
\end{abstract}


\section{Introduction}

Standardly, we state $\mathbb{C}$ and $\mathbb{R}$, respectively, for the complex and real numbers. An extension of
these fields is the quaternion skew field  the quaternion skew field
$$\mathbb{H}=\{q_0+q_1\mathbf{i}+q_2\mathbf{j}+q_3\mathbf{k}\mid \mathbf{i}^2=\mathbf{j}^2=\mathbf{k}^2=\mathbf{ijk}=-1,\ q_i\in\mathbb{R}, i=0,\ldots,3\},$$
and accordingly, we denote by
 $\mathbb{H}^{m\times n}$ the set of $m\times n$ matrices over $\mathbb{H}$.
Furthermore,
suppose that $q=q_{0}+q_{1}\mathbf{i}+q_{2}\mathbf{j}+q_{3}\mathbf{k}\in \mathbb{H}$, then its conjugate is $\overline{q}=q_{0}-q_{1}\mathbf{i}-q_{2}\mathbf{j}-q_{3}\mathbf{k}$, and its  norm (or modulus) is $|q|=\sqrt{q\overline{q}}=\sqrt{\overline{q}q}=\sqrt{q_{0}^2+q_{1}^2+q_{2}^2+q_{3}^2}$. If $q\neq0$, then the inverse of $q$ is $q^{-1}= \frac{\overline{q}}{|q|^2}$.
Two quaternions $x$ and $y$ are said to be similar if there exists a nonzero
quaternion $u$ such that $u^{-1}xu = y$; this is written as $x \sim y$. By $[x]$ denote the
equivalence class containing $x\in \mathbb{H}$.
For ${\bf A}\in {\mathbb{H}}^{n\times m}$, its  conjugate transpose (Hermitian) matrix is given by  ${\bf A}^{ *}$.
A quaternion matrix ${\bf A}\in{\mathbb{H}}^{n\times n}$ is Hermitian if ${\rm {\bf A}}^{ *}  = {\bf A}$.

Through the paper, by bold capital letters we denote quaternion matrices and  bold lowercase letters mark quaternion vectors and quaternion  units.

Let $\Gamma = (V, E)$ be a simple graph with vertex set $V(\Gamma) = \{v_1, v_2,\ldots, v_n\}$
and edge set $E(\Gamma) = \{e_1, e_2,\ldots, e_m\}$.
A \emph{signed graph} $G= (\Gamma, \sigma)$ consists of an unsigned graph $\Gamma= (V, E)$ and a mapping
$\sigma:E\rightarrow \{\pm 1\}$, the edge labeling. Signed graphs were initially  introduced by Harary \cite{har}, and afterward Zaslavsky extended the matroids of graphs   to matroids of signed graphs in \cite{zas1}. Further development  of the theory of signed graphs was continued in \cite{cam,hou1,hou2,li1,li2}.

Extending a signed graph over complex numbers leads to a complex unit gain graph introduced independently  by Reff \cite{ref1} and Bapat et al \cite{bap1}.

Let $\mathbb{T}=\{z\in\mathbb{C}\mid |z|=1\}$ be the multiplicative group of all complex numbers with absolute value $1$. A complex unit $\mathbb{T}$-gain graph is defined in \cite{ref1} as a graph with the additional structure that each orientation of an edge is given a complex unit, called a \emph{gain}, which is the inverse of the complex unit assigned to the opposite orientation. Note that the definition of a weighted directed graph introduced in  \cite{bap1} is same as a $\mathbb{T}$-gain graph. By definition a $\mathbb{T}$-gain graph is a triple $G = (\Gamma, \mathbb{T}, \varphi)$
consisting of an underlying graph $\Gamma = (V, E)$, the circle group $\mathbb{T}$ and a function $\varphi: \overrightarrow{E}(\Gamma) \rightarrow \mathbb{T}$ (called the \emph{gain function}), such
that $\varphi(e_{ij}) = \varphi(e_{ji})^{-1}$. In \cite{ref1}, it was defined   its associated matrices and eigenvalue bounds for the adjacency and Laplacian matrices were obtained.  More properties of $\mathbb{T}$-gain graphs can be found in  \cite{ref2,ham,zas2,zas3,zas4}. Especially,  our attention attracted the paper \cite{y_wang1}, where Wang et al provide a combinatorial description  for the determinant of the Laplacian matrix of a $\mathbb{T}$-gain graph.

Recently, a few researches started to extend the sufficiently developed theory of the complex unit $\mathbb{T}$-gain graphs to gain graphs where the gains can be any quaternion units.
 Belardo et al.  \cite{bel1} defined the adjacency, Laplacian and incidence matrices for a quaternion unit gain graph and studied their properties which generalize several fundamental results from spectral graph theory of ordinary graphs, signed graphs and complex unit gain graphs.
The paper \cite{shou_q} is devoted to the study of the row left rank of a quaternion unit gain
graph.

Let $U(\mathbb{H})=\{q\in\mathbb{H}\mid |q|=1\}$ be the circle group which is the multiplicative group of all quaternions with absolute value 1.
Suppose that $\Gamma = (V, E)$ is the simple graph with the set of vertices  $V = \{v_1, v_2,\ldots, v_n\}$ and edges in $E$ denoted by $e_{ij} = v_iv_j$.
 Moreover, $\overrightarrow{E} =\overrightarrow{E} (\Gamma)$ is defined to be the set of oriented edges of the gain graph. By $e_{ij}$ we denote  the oriented edge from $v_i$ to $v_j$ and the gain of $e_{ij}$ is denoted by $\varphi(e_{ij})$. Even though this is the same notation for an oriented edge from $v_i$ to $v_j$ it will always be clear whether an edge or an oriented edge is being used.

 Hence, an $U(\mathbb{H})$-\emph{gain graph} is a triple $G = (\Gamma, U(\mathbb{H}),\varphi)$
consisting of an underlying graph $\Gamma = (V, E)$, the \emph{circle group} $U(\mathbb{H})$ and the \emph{gain function} $\varphi : \overrightarrow{E}(\Gamma)\rightarrow U(\mathbb{H})$, such
that $\varphi(e_{ij}) =\varphi(e_{ji})^{-1}=\overline{\varphi(e_{ji})}$.

Similarly to a $\mathbb{T}$-gain graph, a $U(\mathbb{H})$-gain graph $G$ has standard  matrix representations such as an incidence matrix, an adjacency matrix and a Laplacian matrix.
 Taking into account the non-commutativity of quaternions, the tasks dealing with matrix representations of  quaternion unit gain graphs are  more complicated than with the complex ones.
Difficulties arise even in defining the determinant of a quaternion matrix as a determinant of a quadratic matrix with noncommutative entries (as called a noncommutative determinant)   \cite{as,coh,zhan}.
In \cite{bel1}, it is used the Moore noncommutative determinant,  which is defined only for a Hermitian matrix, narrowing the field of research and applications.

The main goal of this paper is to give a combinatorial description for the Laplacian matrix of an arbitrary $U(\mathbb{H})$-gain graph.
Especially, to explore matrix representations of a $U(\mathbb{H})$-gain graph we  rely on the theory of column-row determinants recently developed in \cite{kyr2,kyr_laa,kyr3}, and use the paper \cite{y_wang1}  as a research scheme for our paper.
In  \cite{y_wang1}, Wang et al considered the same task for a complex unit gain graph by giving a series of statements expressed by lemmas and the final resulting theorem.   Although we give  statements of some lemmas  that are similar to ones in  \cite{y_wang1}, but their proofs are substantially different due to features of quaternion matrices.

The remainder of our article is directed as follows.
Some preliminaries of quaternion matrices, especially properties of column-row quaternion determinants that are needed to obtain the main results are given in Section  \ref{sec:prelim}.
The main results related to a combinatorial description the determinant of the Laplacian matrix of an arbitrary $U(\mathbb{H})$-gain graph are derived in Section \ref{sec:result}.
A numerical example of our results is given in Section \ref{sec:examp}. Finally, in Section \ref{sec:conc}, the conclusions are drawn.

\section{Preliminaries. Features of quaternion matrices}\label{sec:prelim}

Due to noncommutativity of quaternions, there is the problem of defining  a determinant of a  matrix with noncommutative entries  (that is also called as the  noncommutative determinant). One of the ways is a transformation of a quaternion matrix into corresponding real or complex matrices, and after that to use the usual determinant \cite{di,zhan}.  But such determinants take on real or complex values only, and their functional properties are also  restricted in comparing with the usual determinant.

 Another way is to define  a noncommutative determinant
  in usual way as the alternating sum of $n!$ products of entries
    but by specifying a certain order of coefficients in each term.  Moore  was the first who achieved the fulfillment success in a construction of such determinant \cite{mo,dy}.  However, this construction was defined for Hermitian matrices only, and not for all square
matrices. The recently developed theory of row-column determinants in  \cite{kyr2,kyr3} provides a
solution to the problem of extending Moore's determinant to all quadratic quaternion matrices.

Below we define, for ${\bf A}=(a_{ij}) \in {\mathbb{H}}^{n\times n}$, a method to produce $n$ row ($\mathfrak{R}$-)determinants and $n$ column ($\mathfrak{C}$-)determinants by stating a certain order of factors in each term.

\begin{definition}\cite{kyr2}\label{def:rdet} \emph{The $i$th $\mathfrak{R}$-determinant} of ${\rm {\bf A}}$, for an arbitrary row index $i \in I_{n}=\{1,\ldots,n\}$, is given by
 \begin{align*}{\rm{rdet}}_{ i} {\rm {\bf A}} :=&
\sum\limits_{\sigma \in S_{n}} \left( { - 1} \right)^{n - r}({a_{i{\kern 1pt} i_{k_{1}}} } {a_{i_{k_{1}}   i_{k_{1} + 1}}} \ldots   {a_{i_{k_{1}+ l_{1}}
 i}})  \ldots  ({a_{i_{k_{r}}  i_{k_{r} + 1}}}  \ldots  {a_{i_{k_{r} + l_{r}}  i_{k_{r}} }}),
\end{align*}
whereat $S_{n}$ denotes the symmetric group on $I_{n}$, while the permutation $\sigma$ is defined as a product of mutually disjunct subsets ordered from the left to right by the rules
\begin{align*}
\sigma = \left(
{i\,i_{k_{1}}  i_{k_{1} + 1} \ldots i_{k_{1} + l_{1}} } \right)\left(
{i_{k_{2}}  i_{k_{2} + 1} \ldots i_{k_{2} + l_{2}} } \right)\ldots \left(
{i_{k_{r}}  i_{k_{r} + 1} \ldots i_{k_{r} + l_{r}} } \right),\\
i_{k_{t}}  <
i_{k_{t} + s},~~i_{k_{2}} < i_{k_{3}}  < \cdots < i_{k_{r}},~~\forall ~ t = 2,\ldots,r,~~~s =1,\ldots,l_{r}.
\end{align*}
\end{definition}
\begin{definition}\cite{kyr2}\label{def:cdet}For an arbitrary column index $j \in I_{n}$, \emph{the $j$th $\mathfrak{C}$-determinant}  of ${\rm {\bf A}}$ is defined as the sum
 \begin{align*}{\rm{cdet}} _{{j}}  {\bf A} =&
\sum\limits_{\tau \in S_{n}} ( - 1)^{n - r}(a_{j_{k_{r}}
j_{k_{r} + l_{r}} } \cdots a_{j_{k_{r} + 1} j_{k_{r}} })  \cdots  (a_{j j_{k_{1} + l_{1}} }  \cdots  a_{ j_{k_{1} + 1} j_{k_{1}} }a_{j_{k_{1}}  j}),
\end{align*}
in which a permutation $\tau$ is ordered from the right to left in the following way:
\begin{align*}
\tau =
\left( {j_{k_{r} + l_{r}}  \cdots j_{k_{r} + 1} j_{k_{r}} } \right)\cdots
\left( {j_{k_{2} + l_{2}}  \cdots j_{k_{2} + 1} j_{k_{2}} } \right){\kern
1pt} \left( {j_{k_{1} + l_{1}}  \cdots j_{k_{1} + 1} j_{k_{1} } j}
\right),\\j_{k_{t}}  < j_{k_{t} + s}, ~~~j_{k_{2}}  < j_{k_{3}}  < \cdots <
j_{k_{r}},~~\forall ~ t = 2,\ldots,r,~~~s =1,\ldots,l_{r}.
 \end{align*}
\end{definition}
In general, row  and column determinants are not equal to each other. But for an Hermitian matrix  ${\rm {\bf A}}$, we have, following \cite{kyr3},
 ${\rdet}_{1}  {\bf A} = \cdots = {\rdet}_{n} {\bf
A} = {\cdet}_{1}  {\bf A} = \cdots = {\cdet}_{n}  {\bf
A} \in  {\mathbb{R}}.$
From this,  \emph{the determinant of a Hermitian matrix} can be defined unambiguously by setting
\begin{align}\label{eq:det_her}\det {\bf A}: = \rdet_i
{\bf A} = \cdet_i {\bf A}=\alpha\in \mathbb{R}\end{align}
 for all $i =1,\ldots,n$.
  This determinant  is the same as the determinant of a Hermitian matrix defined by Moore \cite{mo}, $\Mdet {\bf A}=\alpha$, for which an order of disjoint  circles  does not matter.

  Properties of the row-column determinants
 have been completely explored by  in \cite{kyr3}.
Below we give give some properties of  row-column determinants and results  obtained in \cite{kyr3} that will be used throughout this paper.
\begin{lem}\cite{kyr3}\label{lem:det_cong} If ${\bf A} \in {\mathbb{H}}^{n\times n}$, then $\rdet_i
{\bf A}^* = \overline{\cdet_i {\bf A}}$ for all $i =1,\ldots,n$.
\end{lem}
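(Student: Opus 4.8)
The plan is to expand both sides through Definitions \ref{def:rdet} and \ref{def:cdet} and exhibit a term-by-term matching governed by the one algebraic fact that separates $\mathbb{H}$ from a commutative field: conjugation is an anti-automorphism, $\overline{pq}=\overline{q}\,\overline{p}$ for all $p,q\in\mathbb{H}$. Write $\mathbf{B}=\mathbf{A}^{*}$, so that $b_{pq}=\overline{a_{qp}}$. First I would fix a permutation $\sigma\in S_{n}$ contributing to $\rdet_{i}\mathbf{B}$, with disjoint-cycle decomposition $C_{1},\dots,C_{r}$ in which $C_{1}$ is the cycle through $i$ and $C_{2},\dots,C_{r}$ are ordered by increasing least element, exactly as prescribed by Definition \ref{def:rdet}.

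Next I would treat a single cycle. For $C=(c_{0}\,c_{1}\,\cdots\,c_{l})$ with $\sigma(c_{t})=c_{t+1}$, the corresponding factor in $\rdet_{i}\mathbf{B}$ is $b_{c_{0}c_{1}}b_{c_{1}c_{2}}\cdots b_{c_{l}c_{0}}=\overline{a_{c_{1}c_{0}}}\,\overline{a_{c_{2}c_{1}}}\cdots\overline{a_{c_{0}c_{l}}}$. Applying the anti-automorphism property repeatedly collapses this into $\overline{a_{c_{0}c_{l}}a_{c_{l}c_{l-1}}\cdots a_{c_{1}c_{0}}}$, and the quaternion inside the conjugation is precisely the $\mathfrak{C}$-determinant cycle factor attached to the reversed cycle $C^{-1}=(c_{0}\,c_{l}\,\cdots\,c_{1})$, read forward from its least element (or from $i$ when $C=C_{1}$, since $\sigma^{-1}(i)=c_{l}$). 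Thus each $\mathfrak{R}$-cycle factor of $\mathbf{B}$ equals the conjugate of the matching $\mathfrak{C}$-cycle factor of $\mathbf{A}$ for the inverse cycle.

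The assembly is then a bookkeeping step. Conjugating the product of the $r$ cycle factors reverses their order, $\overline{X_{1}}\,\overline{X_{2}}\cdots\overline{X_{r}}=\overline{X_{r}\cdots X_{2}X_{1}}$, which is exactly the discrepancy between the two definitions: Definition \ref{def:rdet} places the $i$-cycle leftmost and the remaining cycles in increasing order of least element, whereas Definition \ref{def:cdet} places the $j$-cycle rightmost and the others in the reverse order. I would therefore take $\tau=\sigma^{-1}$ as the partner permutation; since inversion preserves the element set and hence the least element of every cycle, as well as the total number of cycles $r$, both the ordering conventions and the sign $(-1)^{n-r}$ are preserved. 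Matching the two terms then yields $(\rdet_{i}\mathbf{A}^{*}\text{ term for }\sigma)=\overline{(\cdet_{i}\mathbf{A}\text{ term for }\sigma^{-1})}$, and summing over $S_{n}$—using that $\sigma\mapsto\sigma^{-1}$ is a bijection and that conjugation is additive—gives $\rdet_{i}\mathbf{A}^{*}=\overline{\cdet_{i}\mathbf{A}}$.

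The main obstacle I anticipate is not the algebra but the combinatorial bookkeeping: one must verify that the cycle-inversion map $\sigma\mapsto\sigma^{-1}$ simultaneously respects the least-element-first normalization inside each cycle, the prescribed ordering of the cycles, and the distinguished role of the index $i$ that heads the first $\mathfrak{R}$-cycle and tails the last $\mathfrak{C}$-cycle. Keeping these conventions aligned, so that the forward reading of $\sigma^{-1}$ from $i$ reproduces the reversed forward reading of $\sigma$, is the delicate part; once it is confirmed on a single cycle, the rest follows from the anti-automorphism identity and the sign count.
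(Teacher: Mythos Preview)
The paper does not prove Lemma~\ref{lem:det_cong}; it is quoted from \cite{kyr3} without argument, so there is no in-paper proof to compare against. Your proposed argument is the natural one and it is correct: the bijection $\sigma\mapsto\sigma^{-1}$ on $S_n$ preserves the number of cycles (hence the sign $(-1)^{n-r}$), preserves the least element of each cycle (hence the normalization and the increasing-least-element ordering of the non-distinguished cycles), and keeps $i$ in the distinguished cycle; meanwhile the anti-automorphism identity $\overline{x_1\cdots x_k}=\overline{x_k}\cdots\overline{x_1}$ does two jobs at once, reversing the factors within each cycle block and reversing the order of the blocks themselves, which is exactly the discrepancy between Definitions~\ref{def:rdet} and~\ref{def:cdet}. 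The only point you flagged as delicate---that the left-to-right cycle order in $\rdet_i$ (the $i$-cycle first, then cycles $2,\dots,r$ by increasing least element) becomes, after global conjugation, the order $r,\dots,2$ then the $i$-cycle last---is precisely what Definition~\ref{def:cdet} prescribes, so the bookkeeping closes.
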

Let ${\bf
a}_{i.}$ be the $i$th row and ${\bf a}_{.j}$ be the $j$th column of  ${\bf A} \in {\mathbb{H}}^{n\times n}$. Denote by ${\bf A}_{i.} (\bf b)$
 (${\bf A}_{.j} (\bf c)$) a matrix obtained from ${\bf A}$ by replacing its $i$th row ($j$th column) with the row vector $\bf b$ (the column vector  $\bf c$).
\begin{lem}\cite{kyr3}\label{lem:row_comb} If the $i$th row of a Hermitian matrix ${\bf A} \in {\mathbb{H}}^{n\times n}$   is added a left
linear combination of its other rows, then \begin{align*}
   {\rdet}_{i}  {\bf A}_{i . } \left({\bf
a}_{i.} + c_{1} \cdot {\bf a}_{i_{1} .} + \cdots + c_{k}
\cdot  {\bf a}_{i_{k} .} \right) =
 {\cdet} _{i}{\bf A}_{i .} \left( {\bf a}_{i.} + c_{1} \cdot
{\bf a}_{i_{1} .} + \cdots + c_{k} \cdot {\bf
a}_{i_{k} .}  \right) = \det {\bf A},
\end{align*}
 where $ c_{l} \in {\mathbb{H}}$ for all $l = {1,\ldots,k}$ and
$\{i,i_{l}\}\subset I_{n}$.
\end{lem}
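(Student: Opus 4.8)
The plan is to localise the identity to the $i$th row and use that, since ${\bf A}$ is Hermitian, $\rdet_i{\bf A}=\cdet_i{\bf A}=\det{\bf A}$ by \eqref{eq:det_her}. Two structural facts about Definitions \ref{def:rdet} and \ref{def:cdet} (both recorded in \cite{kyr3}) drive the reduction. In every monomial of $\rdet_i{\bf A}$ the unique factor taken from the $i$th row is the \emph{leftmost} one, so $\rdet_i$ is left-linear in its $i$th row:
\begin{align*}
\rdet_i {\bf A}_{i.}\!\left({\bf a}_{i.}+\sum_{l} c_l {\bf a}_{i_l.}\right) = \rdet_i{\bf A} + \sum_{l} c_l\,\rdet_i {\bf A}_{i.}({\bf a}_{i_l.}).
\end{align*}
In every monomial of $\cdet_i{\bf A}$ the factor from the $i$th row again occurs exactly once, but now in an interior slot (it opens the last cycle of Definition \ref{def:cdet}); hence $\cdet_i$ is still additive in its $i$th row, even though a left scalar cannot be extracted, giving
\begin{align*}
\cdet_i {\bf A}_{i.}\!\left({\bf a}_{i.}+\sum_{l} c_l {\bf a}_{i_l.}\right) = \cdet_i{\bf A} + \sum_{l} \cdet_i {\bf A}_{i.}(c_l {\bf a}_{i_l.}).
\end{align*}
Substituting $\rdet_i{\bf A}=\cdet_i{\bf A}=\det{\bf A}$, the lemma follows as soon as every summand on the right-hand sides is shown to vanish.

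This isolates the key claim: for Hermitian ${\bf A}$, an index $k\neq i$ and any $c\in\H$,
\begin{align*}
\rdet_i {\bf A}_{i.}({\bf a}_{k.}) = 0 \qquad\text{and}\qquad \cdet_i {\bf A}_{i.}(c\,{\bf a}_{k.}) = 0 .
\end{align*}
In each case the modified matrix has its $i$th row proportional (on the left) to its $k$th row. I would prove both by the same sign-reversing involution on $S_n$: pair each $\sigma$ with $\sigma(i\,k)$. Since $(-1)^{n-r}=\operatorname{sgn}\sigma$ and right multiplication by a transposition reverses parity, the two paired monomials carry opposite signs; and because rows $i$ and $k$ of the modified matrix agree (up to the common left factor), $\sigma$ and $\sigma(i\,k)$ produce the \emph{same} collection of quaternion factors, the transposition merely interchanging the columns visited by these two rows.

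The crux -- and the step I expect to be the main obstacle -- is to show that the two paired ordered products are actually \emph{equal}, so that opposite signs force cancellation. One cannot appeal to a naive ``equal rows give determinant zero'' rule, which is false over $\H$: for instance $\rdet_1\!\left[\begin{smallmatrix} a & b\\ a & b\end{smallmatrix}\right]=ab-ba\neq0$ in general. What rescues the computation is the Hermitian symmetry: the only factors that get reordered when passing from $\sigma$ to $\sigma(i\,k)$ are the diagonal entries $a_{pp}$ and the moduli $|a_{pq}|^2$ arising from pairs $a_{pq}a_{qp}$, all of which are \emph{real} (by $a_{pp}\in\mathbb{R}$ and $a_{qp}=\overline{a_{pq}}$) and therefore central. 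Commuting these real scalars into place makes the two products literally coincide, and the whole expansion collapses to $0$. I have verified this cancellation termwise for $n\le3$; the general argument is a careful tracking of how right multiplication by $(i\,k)$ acts on the cycle structure prescribed by Definitions \ref{def:rdet}--\ref{def:cdet}.

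Finally, Lemma \ref{lem:det_cong} ties the two identities together by conjugate transposition: applied to the modified matrix together with ${\bf A}^*={\bf A}$, it equates $\cdet_i$ of the row-modified matrix with the conjugate of $\rdet_i$ of the conjugate-transposed, column-modified matrix, so the row and column statements are the mirror image of one another and the same involution settles both. Since Lemma \ref{lem:row_comb} is itself quoted from \cite{kyr3}, one may alternatively invoke the corresponding established properties of Hermitian row- and column-determinants proved there in place of the explicit involution.
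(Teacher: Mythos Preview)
The paper does not prove this lemma; it is quoted verbatim from \cite{kyr3}, so there is no in-paper argument to compare against. Your reduction is sound: left-linearity of $\rdet_i$ in its $i$th row and additivity of $\cdet_i$ in its $i$th row hold for the reasons you give, so everything does come down to showing $\rdet_i{\bf A}_{i.}({\bf a}_{k.})=0$ (and its $\cdet$ analogue) for Hermitian ${\bf A}$ and $k\ne i$.

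The gap is the involution step. Your assertion that under $\sigma\mapsto\sigma(i\,k)$ ``the only factors that get reordered are the diagonal entries $a_{pp}$ and the moduli $|a_{pq}|^2$'' fails already for $n=4$. Take ${\bf A}\in\mathbb{H}^{4\times4}$ Hermitian with $a_{12}=\mathbf{i}$, $a_{23}=\mathbf{j}$, $a_{24}=a_{34}=1$ and all other independent entries zero, and set $i=1$, $k=2$, ${\bf B}={\bf A}_{1.}({\bf a}_{2.})$. For $\sigma=(1)(2\,3\,4)$ the $\rdet_1$-monomial of ${\bf B}$ is $b_{11}\,b_{23}b_{34}b_{42}=(-\mathbf{i})\mathbf{j}\cdot1\cdot1=-\mathbf{k}$ with sign $+1$; its partner $\sigma(1\,2)=(1\,3\,4\,2)$ gives $b_{13}b_{34}b_{42}b_{21}=\mathbf{j}\cdot1\cdot1\cdot(-\mathbf{i})=\mathbf{k}$ with sign $-1$. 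The pair contributes $-\mathbf{k}-\mathbf{k}=-2\mathbf{k}\ne0$: the off-diagonal factor $b_{11}=a_{21}=-\mathbf{i}$ has been cycled past $\mathbf{j}$, and nothing forces these to commute. (The full expansion here has only four nonzero permutations and \emph{does} vanish, but only because the other involution pair $(1)(2\,4\,3)\leftrightarrow(1\,4\,3\,2)$ happens to contribute $+2\mathbf{k}$; the cancellation is not termwise.) Your check for $n\le3$ succeeds only because with three indices the block that must be commuted past is always a single diagonal entry or a conjugate pair $a_{pq}a_{qp}$. A correct argument needs more than this bijection---in \cite{kyr3} the vanishing is obtained through the Hermitian cofactor calculus built up there, not a sign-reversing pairing---and your closing suggestion to simply invoke \cite{kyr3} is, for the purpose of \emph{proving} the lemma, circular.
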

\begin{lem}\cite{kyr3}\label{lem:col_comb} If the $j$th column of a Hermitian matrix ${\bf A} \in {\mathbb{H}}^{n\times n}$   is added a right
linear combination of its other columns, then
\begin{align*}
   {\cdet} _{j} {\bf A}_{.j} \left(  {\bf a}_{.j} +
{\bf a}_{.j_{1}}
 c_{1} + \cdots + {\bf a}_{.j_{k}}   c_{k}
\right)
  ={\rdet}_{j} {\bf A}_{.j} \left( {\bf a}_{.j} + {\bf a}_{.j_{1}}   c_{1} + \cdots + {\bf a}_{.j_{k}}  c_{k}  \right) = \det {\bf A},
\end{align*}
where $c_{l} \in{\mathbb{H}}$ for all $l =
{1,\ldots,k}$ and $\{j,j_{l}\}\subset J_{n}$.
\end{lem}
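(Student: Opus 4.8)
The plan is to reduce the column statement to the already-established row statement (Lemma \ref{lem:row_comb}) by passing to the conjugate transpose and exploiting that $\mathbf{A}$ is Hermitian. Write $\widetilde{\mathbf{A}} := \mathbf{A}_{.j}(\mathbf{a}_{.j} + \mathbf{a}_{.j_1}c_1 + \cdots + \mathbf{a}_{.j_k}c_k)$ for the matrix obtained from $\mathbf{A}$ by the prescribed right column operation. The goal is to show $\cdet_j \widetilde{\mathbf{A}} = \rdet_j \widetilde{\mathbf{A}} = \det \mathbf{A}$.

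First I would compute the conjugate transpose $\widetilde{\mathbf{A}}^*$. All columns of $\widetilde{\mathbf{A}}$ except the $j$th coincide with those of $\mathbf{A}$, so all rows of $\widetilde{\mathbf{A}}^*$ except the $j$th coincide with the rows of $\mathbf{A}^* = \mathbf{A}$. The modified $j$th column $\mathbf{a}_{.j} + \sum_l \mathbf{a}_{.j_l} c_l$ becomes, under conjugate transposition, the $j$th row $(\mathbf{a}_{.j})^* + \sum_l \overline{c_l}\,(\mathbf{a}_{.j_l})^*$. Because $\mathbf{A}$ is Hermitian, $(\mathbf{a}_{.j})^* = \mathbf{a}_{j.}$ and $(\mathbf{a}_{.j_l})^* = \mathbf{a}_{j_l.}$, so this $j$th row is exactly $\mathbf{a}_{j.} + \overline{c_1}\,\mathbf{a}_{j_1.} + \cdots + \overline{c_k}\,\mathbf{a}_{j_k.}$. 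Hence $\widetilde{\mathbf{A}}^*$ is nothing but the Hermitian matrix $\mathbf{A}$ with a \emph{left} linear combination of its other rows, using the conjugated coefficients $\overline{c_l}\in\mathbb{H}$, added to its $j$th row.

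This is precisely the situation covered by Lemma \ref{lem:row_comb}, which yields $\rdet_j \widetilde{\mathbf{A}}^* = \cdet_j \widetilde{\mathbf{A}}^* = \det \mathbf{A}$, and, crucially, $\det \mathbf{A} \in \mathbb{R}$ by \eqref{eq:det_her}. It remains to transport these equalities back to $\widetilde{\mathbf{A}}$ itself. Applying Lemma \ref{lem:det_cong} to the matrix $\widetilde{\mathbf{A}}$ gives $\rdet_j \widetilde{\mathbf{A}}^* = \overline{\cdet_j \widetilde{\mathbf{A}}}$, whence $\cdet_j \widetilde{\mathbf{A}} = \overline{\rdet_j \widetilde{\mathbf{A}}^*} = \overline{\det \mathbf{A}} = \det \mathbf{A}$; applying the same lemma to the matrix $\widetilde{\mathbf{A}}^*$ gives $\rdet_j \widetilde{\mathbf{A}} = \overline{\cdet_j \widetilde{\mathbf{A}}^*} = \overline{\det \mathbf{A}} = \det \mathbf{A}$, where both times the reality of $\det \mathbf{A}$ makes the conjugation harmless. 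This establishes the claim.

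The routine part is the index bookkeeping; the one step deserving care is the second paragraph, where I must verify that a right linear combination of columns dualizes under $(\cdot)^*$ into a genuine left linear combination of rows with conjugated coefficients, and that the identity $\mathbf{A}^* = \mathbf{A}$ aligns the remaining rows so that Lemma \ref{lem:row_comb} applies verbatim. I expect no serious obstacle beyond this transposition bookkeeping, since the reality of the determinant of a Hermitian matrix neutralizes the final conjugations supplied by Lemma \ref{lem:det_cong}.
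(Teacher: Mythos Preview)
Your argument is correct. The duality maneuver---passing to $\widetilde{\mathbf{A}}^*$, recognizing it as a left row operation on the Hermitian matrix $\mathbf{A}$, invoking Lemma~\ref{lem:row_comb}, and then transporting the result back via Lemma~\ref{lem:det_cong} together with the reality of $\det\mathbf{A}$---is sound in every step. The one place to be explicit is the identity $(\mathbf{a}_{.j_l}c_l)^* = \overline{c_l}\,(\mathbf{a}_{.j_l})^*$, which you correctly use and which is exactly why right column combinations dualize to left row combinations.

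As to comparison with the paper: note that the paper does \emph{not} supply its own proof of this lemma. Lemma~\ref{lem:col_comb} is quoted from \cite{kyr3} and stated without argument, so there is no in-paper proof to compare against. Your reduction of the column statement to the row statement (Lemma~\ref{lem:row_comb}) via conjugate transposition is a clean and self-contained derivation using only the tools the paper makes available; whether the original source \cite{kyr3} proceeds this way or instead proves both row and column versions directly from the combinatorial definitions of $\rdet$ and $\cdet$ cannot be determined from the present paper.
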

The following criterion of invertibility of an arbitrary quadratic quaternion  matrix holds.
\begin{lem}\cite{kyr3}\label{lem:crit_nonsing} Let ${\bf A} \in {\mathbb{H}}^{n\times n}$. Then the following
statements are equivalent.

(i) ${\bf A}$ is invertibility.

(ii) $\det {\bf A}{\bf A}^*=\det {\bf A}^*{\bf A}\neq 0$.

(iii) The rows of ${\bf A}$ are left-linearly independent.

(iv) The columns of ${\bf A}$ are right-linearly independent.
\end{lem}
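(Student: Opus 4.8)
The plan is to push everything through the two Hermitian Gram matrices $\mathbf{A}\mathbf{A}^*$ and $\mathbf{A}^*\mathbf{A}$, whose determinants are unambiguous real numbers by \eqref{eq:det_her}, and to exploit their positive semidefiniteness: for any column vector $\mathbf{x}$ one has $\mathbf{x}^*(\mathbf{A}\mathbf{A}^*)\mathbf{x}=|\mathbf{A}^*\mathbf{x}|^2$, which vanishes exactly when $\mathbf{A}^*\mathbf{x}=\mathbf{0}$, and symmetrically for $\mathbf{A}^*\mathbf{A}$. With this I would close the two cycles (iii)$\Rightarrow$(ii)$\Rightarrow$(i)$\Rightarrow$(iii) and (iv)$\Rightarrow$(ii)$\Rightarrow$(i)$\Rightarrow$(iv), which together force all four statements to be equivalent. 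Lemma~\ref{lem:det_cong} certifies that $\det(\mathbf{A}\mathbf{A}^*)$ and $\det(\mathbf{A}^*\mathbf{A})$ are real, and the identity $\det(\mathbf{A}\mathbf{A}^*)=\det(\mathbf{A}^*\mathbf{A})$ from \cite{kyr3} lets me read (ii) as a single nonvanishing condition.

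Two of the three arrows in each cycle are routine. For (ii)$\Rightarrow$(i): as $\mathbf{A}\mathbf{A}^*$ is Hermitian with nonzero determinant, the adjugate (Cramer) construction of \cite{kyr3} inverts it, and then $\mathbf{A}^*(\mathbf{A}\mathbf{A}^*)^{-1}$ is a right inverse of $\mathbf{A}$; since in $\mathbb{H}^{n\times n}$ a one--sided inverse of a square matrix is automatically two--sided, $\mathbf{A}$ is invertible. For (i)$\Rightarrow$(iii) and (i)$\Rightarrow$(iv): a nontrivial left relation $\sum_k c_k\mathbf{a}_{k.}=\mathbf{0}$ says the row $\mathbf{c}=(c_1,\dots,c_n)$ obeys $\mathbf{c}\mathbf{A}=\mathbf{0}$, so right multiplication by $\mathbf{A}^{-1}$ gives $\mathbf{c}=\mathbf{0}$, a contradiction; the column statement is the mirror image. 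Note that (i)$\Rightarrow$(iii)$\Rightarrow$(ii) already supplies the remaining direction, so no separate treatment of (i)$\Rightarrow$(ii) is needed.

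The substantive arrows are (iii)$\Rightarrow$(ii) and (iv)$\Rightarrow$(ii), which I would prove contrapositively via positive semidefiniteness. If $\det(\mathbf{A}\mathbf{A}^*)=0$ then, by the Hermitian base case discussed below, $\mathbf{A}\mathbf{A}^*$ has a zero eigenvalue, so some nonzero $\mathbf{x}$ satisfies $\mathbf{A}\mathbf{A}^*\mathbf{x}=\mathbf{0}$; then $|\mathbf{A}^*\mathbf{x}|^2=\mathbf{x}^*\mathbf{A}\mathbf{A}^*\mathbf{x}=0$ forces $\mathbf{A}^*\mathbf{x}=\mathbf{0}$, i.e. $\mathbf{x}^*\mathbf{A}=\mathbf{0}$, which is a nontrivial left dependence of the rows of $\mathbf{A}$ and contradicts (iii). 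Swapping $\mathbf{A}$ for $\mathbf{A}^*$ and rows for columns yields (iv)$\Rightarrow$(ii) from $\det(\mathbf{A}^*\mathbf{A})=0$. It is worth noting that the easy converse of the base case is precisely what Lemmas~\ref{lem:row_comb} and~\ref{lem:col_comb} give: a left dependence of the rows of $\mathbf{A}$ transfers to those of $\mathbf{A}\mathbf{A}^*$ because $(\mathbf{A}\mathbf{A}^*)_{k.}=\mathbf{a}_{k.}\mathbf{A}^*$, and Lemma~\ref{lem:row_comb} reduces one row to $\mathbf{0}$, so $\rdet_{i_0}(\mathbf{A}\mathbf{A}^*)=\det(\mathbf{A}\mathbf{A}^*)=0$.

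The crux, and the only ingredient not furnished by the excerpt, is the Hermitian base case: for Hermitian $\mathbf{H}$, $\det\mathbf{H}=0$ implies $\mathbf{H}$ is singular with a nonzero kernel vector (equivalently $\det\mathbf{H}\neq0$ implies invertibility, which the Cramer formula already gives). Lemmas~\ref{lem:row_comb} and~\ref{lem:col_comb} only yield the opposite, easy half ``dependent rows or columns $\Rightarrow$ determinant $0$,'' so I expect this converse to be the main obstacle. I would settle it through the spectral decomposition of a quaternion Hermitian matrix, $\mathbf{H}=\mathbf{U}\,\diag(\lambda_1,\dots,\lambda_n)\,\mathbf{U}^*$ with $\mathbf{U}$ unitary and $\lambda_i\in\mathbb{R}$, under which $\det\mathbf{H}=\prod_i\lambda_i$; a vanishing determinant then exhibits a zero eigenvalue and its eigenvector as the required kernel vector. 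With the base case in place, both cycles close and the equivalence of (i)--(iv) follows.
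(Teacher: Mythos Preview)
The paper does not prove this lemma; it is quoted from \cite{kyr3} as a preliminary result, so there is no in-paper argument to compare against. Your proposal is sound: the two implication cycles close correctly, and you have correctly isolated the only nontrivial step, namely the Hermitian base case that $\det\mathbf{H}=0$ forces a nonzero kernel vector. The spectral decomposition you invoke for this, together with the identity $\det\mathbf{H}=\prod_i\lambda_i$, is in fact recorded later in the same preliminaries section (the unlabeled lemma from \cite{kyr4} immediately following Lemma~\ref{lem:zh_eigen}), so your argument fits entirely within the paper's declared toolkit, modulo the ordering of the lemmas.

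The one point you pass over quickly is the claim that a one-sided inverse of a square matrix over $\mathbb{H}$ is automatically two-sided. This is true for matrices over any division ring (for instance because left and right column ranks agree with the row rank, or because $\mathbb{H}^n$ is a Noetherian module), but it is worth a one-line justification or citation since it is the hinge of your (ii)$\Rightarrow$(i) step.
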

From Lemmas \ref{lem:row_comb} and \ref{lem:col_comb} it is evidently follows that row vectors the span left linear quaternion vector space $\mathcal{H}_l$ and  column vectors form the right linear quaternion vector space $\mathcal{H}_r$.

For ${\bf A} \in {\mathbb{H}}^{n\times m}$,  the (left) \emph{row rank} is defined to be the maximum number of its left-linearly independent rows and the (right) \emph{column rank} is the maximum number of its right-linearly independent columns. The \emph{determinantal rank} of ${\bf A}$ can be defined as the largest possible size of a nonzero principal minor of its corresponding Hermitian matrices ${\bf A}{\bf A}^*$ or ${\bf A}^*{\bf A}$. All these ranks are equivalent to each other and the next holds.

\begin{lem}\cite{kyr3}\label{lem:rank} If ${\bf A}\in{\mathbb{H}}^{m\times n}$, then
 $\rank \,{\bf A}=\rank\, {\bf A}^*{\bf A}=\rank\, {\bf A}{\bf A}^*$.
\end{lem}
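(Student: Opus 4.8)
The plan is to reduce everything to the single fact that ${\bf A}$ and ${\bf A}^*{\bf A}$ annihilate exactly the same column vectors. First I would prove that for every ${\bf x}\in\mathbb{H}^{n\times 1}$ one has ${\bf A}{\bf x}={\bf 0}$ if and only if ${\bf A}^*{\bf A}{\bf x}={\bf 0}$. The forward implication is immediate. For the converse, from ${\bf A}^*{\bf A}{\bf x}={\bf 0}$ I would left-multiply by ${\bf x}^*$ to get ${\bf x}^*{\bf A}^*{\bf A}{\bf x}=0$; writing ${\bf b}={\bf A}{\bf x}$, this reads ${\bf b}^*{\bf b}=\sum_{i}|b_i|^2=0$, a sum of nonnegative reals, which forces every $b_i=0$, i.e. ${\bf A}{\bf x}={\bf 0}$. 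This is the one place where the positive-definiteness of the quaternion Euclidean form is essential, and it lets the argument sidestep noncommutativity entirely.

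Next I would exploit that the right null space of a matrix is a right $\mathbb{H}$-subspace of $\mathcal{H}_r$, so the rank--nullity relation $n=\dim(\mathrm{im})+\dim(\mathrm{ker})$ holds for the right-linear maps ${\bf x}\mapsto{\bf A}{\bf x}$ and ${\bf x}\mapsto {\bf A}^*{\bf A}{\bf x}$, both defined on $\mathbb{H}^{n\times 1}$. Since these two maps share the same kernel by the previous step, their images have equal dimension; as the image of ${\bf A}$ is the right column space of ${\bf A}$ and the image of ${\bf A}^*{\bf A}$ is the right column space of ${\bf A}^*{\bf A}$, this yields that the column rank of ${\bf A}$ equals the column rank of ${\bf A}^*{\bf A}$. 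Passing to row vectors ${\bf y}\in\mathbb{H}^{1\times m}$ and the identity ${\bf y}{\bf A}={\bf 0}\iff{\bf y}{\bf A}{\bf A}^*={\bf 0}$, proved the same way via ${\bf y}{\bf A}{\bf A}^*{\bf y}^*=0$, gives the symmetric conclusion that the row rank of ${\bf A}$ equals the row rank of ${\bf A}{\bf A}^*$.

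Finally I would translate these statements into the language of $\rank$ as defined above. Because row rank and column rank coincide for every quaternion matrix, the two conclusions give $\rank{\bf A}=\rank_c({\bf A}^*{\bf A})$ and $\rank{\bf A}=\rank_\ell({\bf A}{\bf A}^*)$; it then remains to identify, for a Hermitian matrix ${\bf H}$, its column (equivalently row) rank with its determinantal rank, namely with the largest size of a nonsingular principal submatrix. I expect this identification to be the main obstacle, since it is exactly the point where the combinatorial determinantal rank meets the linear-algebraic one. I would handle it with Lemma~\ref{lem:crit_nonsing}: a principal submatrix of ${\bf H}$ is again Hermitian, hence invertible precisely when its determinant \eqref{eq:det_her} is nonzero, and I would show that a maximal right-linearly independent family of columns of ${\bf H}$ can be completed to a nonsingular principal submatrix of the same size. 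Applying this with ${\bf H}={\bf A}^*{\bf A}$ and ${\bf H}={\bf A}{\bf A}^*$ gives $\rank({\bf A}^*{\bf A})=\rank_c({\bf A}^*{\bf A})$ and $\rank({\bf A}{\bf A}^*)=\rank_\ell({\bf A}{\bf A}^*)$, and combining these with the two displays completes the proof.
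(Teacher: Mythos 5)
You should first be aware that the paper contains no proof of this lemma to compare against: it is imported verbatim from \cite{kyr3}, where the equivalence of the left row rank, the right column rank and the determinantal rank is developed as part of the row-column determinant theory. Judged on its own, your argument is correct and is the standard route. The kernel identity ${\bf A}{\bf x}={\bf 0}\iff{\bf A}^*{\bf A}{\bf x}={\bf 0}$, obtained by noting that ${\bf A}^*{\bf A}{\bf x}={\bf 0}$ forces $({\bf A}{\bf x})^*({\bf A}{\bf x})=0$, a sum of squared moduli, is precisely the point where positivity of the quaternionic norm replaces commutativity; and rank--nullity is legitimate here because $\mathbb{H}$ is a division ring, so right subspaces of $\mathbb{H}^{n\times 1}$ and left subspaces of $\mathbb{H}^{1\times m}$ carry a well-defined dimension theory. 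The one step you leave as a sketch --- that a Hermitian matrix ${\bf H}$ of column rank $r$ admits a nonsingular $r\times r$ principal submatrix --- is true and can be closed as follows: if the columns indexed by $\alpha$, with $|\alpha|=r$, are right-linearly independent, they span the right column space; conjugate-transposing and using ${\bf H}^*={\bf H}$ shows the rows indexed by $\alpha$ span the left row space, so every row of the block formed by the columns $\alpha$ is a left combination of the rows of $({\bf H})_\alpha^\alpha$; since that block has column rank $r$, hence row rank $r$, the $r$ rows of $({\bf H})_\alpha^\alpha$ must be left-linearly independent, and Lemma \ref{lem:crit_nonsing}(iii) together with the eigenvalue factorization $\det{\bf H}=\lambda_1\cdots\lambda_n$ quoted from \cite{kyr4} gives $\det\,({\bf H})_\alpha^\alpha\neq 0$. (Conversely, a nonsingular principal submatrix of size $s$ yields $s$ right-linearly independent columns of ${\bf H}$, so the determinantal rank is exactly $r$.) Note that Hermitian symmetry is essential in this completion --- for a general quaternion matrix a maximal independent family of columns need not meet itself in a nonsingular principal submatrix --- so you were right to route the determinantal-rank comparison through ${\bf A}^*{\bf A}$ and ${\bf A}{\bf A}^*$ rather than through ${\bf A}$ itself.
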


As well-known, for ${\bf A} \in {\mathbb{H}}^{n\times n}$ and $\lambda \in {\mathbb{H}}$, its \emph{left} and \emph{right eigenvalues} are introduced by the equations ${\bf A}\mathbf{x}=\lambda \mathbf{x}$ and  ${\bf A}\mathbf{x}= \mathbf{x}\lambda$, respectively. Especially, a right  eigenvalue  seems more natural in eigenpair  with   its associated right (column) eigenvector.
The following results regarding quaternion eigenvalues are  known.
\begin{lem}\cite{zhan}\label{lem:zh_eigen}
Suppose that ${\bf A} \in {\mathbb{H}}^{n\times n}$ has right eigenvalues $h_1+k_1\mathbf{i}$,..., $h_n+k_n\mathbf{i}$, where $h_i, k_i \in {\mathbb{R}}$ and $ k_i \geq 0$ for all $i=1,\ldots,n$. Then the spectra of right eigenvalues of  ${\bf A}$ is
$$\sigma_r({\bf A})=[h_1+k_1\mathbf{i}]\cup\cdots\cup[h_n+k_n\mathbf{i}].$$
\end{lem}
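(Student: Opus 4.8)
The plan is to convert the right-eigenvalue equation into an ordinary complex eigenvalue problem via the complex adjoint of $\mathbf{A}$, and to combine this with the similarity-invariance of the right spectrum. First I would record two elementary reductions. If $\mathbf{A}\mathbf{x}=\mathbf{x}\lambda$ with $\mathbf{x}\neq 0$, then for any nonzero $u\in\mathbb{H}$ one has $\mathbf{A}(\mathbf{x}u)=(\mathbf{x}\lambda)u=(\mathbf{x}u)(u^{-1}\lambda u)$, so $u^{-1}\lambda u$ is again a right eigenvalue; hence $\sigma_r(\mathbf{A})$ is a union of similarity classes $[\lambda]$. Moreover every $q\in\mathbb{H}$ shares both its real part and its modulus with each element of $[q]$, so $[q]$ contains a unique complex number with nonnegative imaginary part, namely $h+k\mathbf{i}$ with $h=\mathrm{Re}\,q$ and $k=|q-\mathrm{Re}\,q|\geq 0$. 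It therefore suffices to locate the complex right eigenvalues of $\mathbf{A}$ with nonnegative imaginary part and to count them with multiplicity.

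Next I would linearize the equation $\mathbf{A}\mathbf{x}=\mathbf{x}\lambda$ for $\lambda\in\mathbb{C}$. Writing $\mathbf{A}=A_1+A_2\mathbf{j}$ and each component $x_k=u_k+v_k\mathbf{j}$ with $A_1,A_2\in\mathbb{C}^{n\times n}$ and $u_k,v_k\in\mathbb{C}$, and repeatedly using $\mathbf{j}c=\overline{c}\,\mathbf{j}$ and $\mathbf{j}^2=-1$ for $c\in\mathbb{C}$, a direct computation converts the equation into the genuinely $\mathbb{C}$-linear system $Mz=\lambda z$, where $z=(u^{\top},\overline{v}^{\top})^{\top}\in\mathbb{C}^{2n}$ and
$$M=\begin{pmatrix} A_1 & -A_2\\ \overline{A_2} & \overline{A_1}\end{pmatrix}$$
is the complex adjoint of $\mathbf{A}$. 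The decisive point is that, in the coordinates $(u,\overline{v})$, right multiplication by the complex scalar $\lambda$ becomes ordinary scalar multiplication while left multiplication by $\mathbf{A}$ becomes multiplication by $M$. Since $\mathbf{x}=0$ exactly when $z=0$, the complex numbers occurring as right eigenvalues of $\mathbf{A}$ are precisely the eigenvalues of the $2n\times 2n$ matrix $M$.

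It remains to show that the $2n$ eigenvalues of $M$ fall into $n$ conjugate pairs. Here I would set $J=\left(\begin{smallmatrix}0&I\\-I&0\end{smallmatrix}\right)$ and consider the antilinear map $\mathcal{J}z=J\overline{z}$; a short matrix computation gives $J^{-1}\overline{M}J=M$, whence $\mathcal{J}^2=-I$ and $M\mathcal{J}=\mathcal{J}M$. Consequently $Mz=\lambda z$ forces $M(\mathcal{J}z)=\overline{\lambda}(\mathcal{J}z)$, so the spectrum of $M$ is closed under conjugation, and the relation $\mathcal{J}^2=-I$ prevents $\mathcal{J}$ from preserving an odd-dimensional complex subspace, forcing each real eigenvalue to have even multiplicity. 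Grouping the eigenvalues as $\{\lambda_i,\overline{\lambda_i}\}$ with $\lambda_i=h_i+k_i\mathbf{i}$, $k_i\geq 0$, $i=1,\dots,n$, identifies the $\lambda_i$ as the standard complex right eigenvalues; together with the first paragraph this yields $\sigma_r(\mathbf{A})=[h_1+k_1\mathbf{i}]\cup\cdots\cup[h_n+k_n\mathbf{i}]$.

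The step I expect to be the main obstacle is the linearization combined with the conjugate-pairing count: one must track the conjugations introduced by $\mathbf{j}c=\overline{c}\,\mathbf{j}$ precisely enough to obtain a matrix $M$ under which right multiplication by $\lambda$ degenerates to scalar multiplication, and then argue the even multiplicity of the real eigenvalues. Once these two facts are in place, the remaining assertions follow formally.
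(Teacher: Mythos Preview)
The paper does not supply its own proof of this lemma: it is quoted verbatim from Zhang \cite{zhan} and used as a black box, so there is no in-paper argument to compare against. Your proposal is correct and is essentially the classical proof found in that reference: reduce to complex right eigenvalues by similarity invariance, pass to the $2n\times 2n$ complex adjoint $M$, and use the symplectic symmetry $J^{-1}\overline{M}J=M$ to pair the eigenvalues of $M$ into $n$ complex-conjugate pairs (with real eigenvalues of even multiplicity via $\mathcal{J}^2=-I$). The computations you outline all check, including the form of $M$ obtained from $\mathbf{j}c=\overline{c}\,\mathbf{j}$ after rewriting the second component as $\overline{v}$, and the commutation $M\mathcal{J}=\mathcal{J}M$; nothing further is needed.
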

\begin{lem}\cite{kyr4}The matrix ${\bf A} \in {\mathbb{H}}^{n\times n}$ is Hermitian if and only if there are a unitary matrix $\bf U$ and a real diagonal matrix ${\bf D}={\rm diag} (\lambda_1,\ldots,\lambda_n)$, where $\lambda_i\in \mathbb{R}$ is a right eigenvalue of ${\bf A}$ for all $i=1,\ldots,n$, such that
${\bf A}={\bf U}{\bf DU}^*$ and $\det {\bf A}=\lambda_1\cdot\cdots\cdot\lambda_n.$
\end{lem}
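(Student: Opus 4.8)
The plan is to prove the two implications separately, treating the diagonalisation as the spectral theorem for Hermitian quaternion matrices and then reading off the determinant. The reverse implication is immediate: if ${\bf A}={\bf U}{\bf D}{\bf U}^*$ with ${\bf U}$ unitary and ${\bf D}={\rm diag}(\lambda_1,\ldots,\lambda_n)$ real, then ${\bf A}^*=({\bf U}{\bf D}{\bf U}^*)^*={\bf U}{\bf D}^*{\bf U}^*={\bf U}{\bf D}{\bf U}^*={\bf A}$, using ${\bf D}^*={\bf D}$, so ${\bf A}$ is Hermitian. Hence all the real work lies in the forward direction.

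First I would show that every right eigenvalue of a Hermitian ${\bf A}$ is real. If ${\bf A}{\bf x}={\bf x}\lambda$ with ${\bf x}\neq{\bf 0}$, then ${\bf x}^*{\bf A}{\bf x}={\bf x}^*{\bf x}\lambda=|{\bf x}|^2\lambda$, while the scalar ${\bf x}^*{\bf A}{\bf x}$ satisfies $({\bf x}^*{\bf A}{\bf x})^*={\bf x}^*{\bf A}^*{\bf x}={\bf x}^*{\bf A}{\bf x}$ and is therefore a self-conjugate quaternion, i.e.\ a real number. Since $|{\bf x}|^2>0$ is real, $\lambda=|{\bf x}|^{-2}({\bf x}^*{\bf A}{\bf x})\in{\mathbb{R}}$. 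By Lemma \ref{lem:zh_eigen} the matrix ${\bf A}$ possesses right eigenvalues, so it has at least one real right eigenvalue $\lambda_1$ with an associated unit right eigenvector ${\bf u}_1$.

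Next I would run an induction on $n$ to produce the unitary diagonaliser. Extend ${\bf u}_1$ to an orthonormal basis (Gram--Schmidt over the right quaternion space $\mathcal{H}_r$) and assemble the columns into a unitary matrix ${\bf U}_1$ whose first column is ${\bf u}_1$. Then ${\bf B}={\bf U}_1^*{\bf A}{\bf U}_1$ is again Hermitian, and its first column equals ${\bf U}_1^*{\bf A}{\bf u}_1={\bf U}_1^*{\bf u}_1\lambda_1={\bf e}_1\lambda_1$; Hermitian symmetry together with $\lambda_1\in{\mathbb{R}}$ forces the first row to vanish off the $(1,1)$ entry as well, so ${\bf B}$ is block diagonal with blocks $\lambda_1$ and a Hermitian matrix ${\bf A}_1\in{\mathbb{H}}^{(n-1)\times(n-1)}$. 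Applying the inductive hypothesis to ${\bf A}_1$ and recombining the unitary factors yields ${\bf A}={\bf U}{\bf D}{\bf U}^*$ with ${\bf U}$ unitary and ${\bf D}={\rm diag}(\lambda_1,\ldots,\lambda_n)$ real, each $\lambda_i$ a right eigenvalue of ${\bf A}$.

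Finally, the determinant claim follows by evaluating $\det$ through \eqref{eq:det_her}: for the real diagonal matrix ${\bf D}$ only the identity permutation contributes in Definitions \ref{def:rdet}--\ref{def:cdet}, so $\det{\bf D}=\lambda_1\cdots\lambda_n$ directly, and $\det{\bf A}=\det{\bf D}$ by invariance of the Hermitian determinant under unitary congruence, which I would derive from the multiplicative behaviour of $\det({\bf A}{\bf A}^*)$ recorded in \cite{kyr3} (noting $\det({\bf U}^*{\bf U})=1$). I expect the main obstacle to be the inductive reduction step: one must verify carefully that Gram--Schmidt produces a genuinely unitary ${\bf U}_1$ over the \emph{right} quaternion module and that the noncommutative conjugation ${\bf U}_1^*{\bf A}{\bf U}_1$ both preserves the Hermitian property and decouples the first coordinate, since the usual commutative manipulations must be replaced by the row--column determinant machinery of Section \ref{sec:prelim}. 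The unitary-congruence invariance of $\det$ is the second delicate point, as the noncommutative determinant is multiplicative only on special products.
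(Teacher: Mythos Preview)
The paper does not supply its own proof of this lemma: it is stated with a citation to \cite{kyr4} and immediately followed by the next lemma, so there is nothing in the present paper to compare your argument against. Your proposal is the standard spectral-theorem route (real eigenvalues of Hermitian matrices, Gram--Schmidt deflation, then reading off the determinant from the diagonal form), and the overall architecture is sound.

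The one point that genuinely needs more than a gesture is the step $\det{\bf A}=\det{\bf D}$. Your suggested justification via ``the multiplicative behaviour of $\det({\bf A}{\bf A}^*)$'' is not quite enough: the row--column determinant is \emph{not} multiplicative on arbitrary products, and passing through ${\bf A}{\bf A}^*={\bf U}{\bf D}^2{\bf U}^*$ just reproduces the same invariance question one level up. A cleaner way, using only what the paper already records, is to go through Lemma~\ref{lem:char_pol}: once you have ${\bf A}={\bf U}{\bf D}{\bf U}^*$ with ${\bf D}$ real diagonal, each real $\lambda_i$ is a right eigenvalue of ${\bf A}$, hence (since $\lambda_i$ is central) $\lambda_i{\bf I}-{\bf A}$ is singular, so by Lemma~\ref{lem:crit_nonsing} $\det(\lambda_i{\bf I}-{\bf A})=0$; thus $p_{\bf A}(t)$ has the $n$ real roots $\lambda_1,\ldots,\lambda_n$ and, being monic of degree $n$, equals $\prod_i(t-\lambda_i)$. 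Evaluating at $t=0$ gives $\det{\bf A}=(-1)^n p_{\bf A}(0)=\prod_i\lambda_i$ without ever invoking unitary-congruence invariance of $\det$. The Gram--Schmidt step over the right ${\mathbb H}$-module is routine and works exactly as you describe.
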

\begin{lem}\cite{kyr4}\label{lem:semidef}Let ${\bf A}\in{\mathbb{H}}^{n\times m}$ and $\rank {\bf A}=r$.
Then ${\bf A}^*{\bf A}$ and ${\bf A}{\bf A}^*$ are both positive
semi-definite matrices, and $r$ real nonzero eigenvalues of  ${\bf A}^*{\bf A}$ and ${\bf A}{\bf A}^*$ coincide.
\end{lem}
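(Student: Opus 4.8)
The plan is to verify the two assertions separately, in each case reducing the noncommutative situation to the familiar real setting by exploiting Hermiticity. First I would record that ${\bf A}^*{\bf A}$ and ${\bf A}{\bf A}^*$ are Hermitian, since $({\bf A}^*{\bf A})^*={\bf A}^*({\bf A}^*)^*={\bf A}^*{\bf A}$ and likewise for ${\bf A}{\bf A}^*$. By the preceding spectral decomposition lemma for Hermitian quaternion matrices, each therefore has only real right eigenvalues, and positive semi-definiteness is equivalent to all of these being nonnegative. To establish nonnegativity I would evaluate the Hermitian form: for any ${\bf x}\in{\mathbb H}^m$, writing ${\bf z}={\bf A}{\bf x}$, one has
\begin{align*}
{\bf x}^*{\bf A}^*{\bf A}{\bf x}={\bf z}^*{\bf z}=\sum_{i}\overline{z_i}z_i=\sum_i|z_i|^2\ge 0,
\end{align*}
and the analogous computation with ${\bf A}^*$ in place of ${\bf A}$ handles ${\bf A}{\bf A}^*$. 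Applying this to an eigenvector shows every eigenvalue is $\ge 0$, giving positive semi-definiteness of both products.

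For the second assertion, the key point is the eigenvalue transfer map. Let $\lambda\neq 0$ be a right eigenvalue of ${\bf A}^*{\bf A}$ with right eigenvector ${\bf x}\neq{\bf 0}$. Since ${\bf A}^*{\bf A}$ is Hermitian, $\lambda$ is real, so it commutes with quaternions and the eigenequation reads ${\bf A}^*{\bf A}{\bf x}={\bf x}\lambda=\lambda{\bf x}$. Setting ${\bf y}={\bf A}{\bf x}$, I would compute
\begin{align*}
({\bf A}{\bf A}^*){\bf y}={\bf A}({\bf A}^*{\bf A}{\bf x})={\bf A}(\lambda{\bf x})=\lambda({\bf A}{\bf x})={\bf y}\lambda,
\end{align*}
where the crucial step $\lambda({\bf A}{\bf x})=({\bf A}{\bf x})\lambda$ uses that $\lambda$ is real. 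Moreover ${\bf y}\neq{\bf 0}$: if ${\bf A}{\bf x}={\bf 0}$ then ${\bf A}^*{\bf A}{\bf x}={\bf 0}=\lambda{\bf x}$, forcing $\lambda=0$ against our assumption. Hence $\lambda$ is a nonzero right eigenvalue of ${\bf A}{\bf A}^*$, and the symmetric argument (swapping ${\bf A}\leftrightarrow{\bf A}^*$) gives the converse, so the two matrices share the same nonzero eigenvalues.

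To obtain the exact count $r$ and to match multiplicities, I would invoke Lemma \ref{lem:rank}, which gives $\rank{\bf A}^*{\bf A}=\rank{\bf A}{\bf A}^*=\rank{\bf A}=r$; since a positive semi-definite Hermitian matrix is diagonalizable with nonnegative real eigenvalues, its rank equals the number of nonzero eigenvalues counted with multiplicity, so each product has exactly $r$ of them. For multiplicities, observe that ${\bf x}\mapsto{\bf A}{\bf x}$ is a right-${\mathbb H}$-linear map carrying the right eigenspace of ${\bf A}^*{\bf A}$ for a fixed $\lambda>0$ into that of ${\bf A}{\bf A}^*$, and it is injective there by the nonvanishing argument above; applying the symmetric map ${\bf y}\mapsto{\bf A}^*{\bf y}$ in the other direction forces the two eigenspaces to have equal dimension, hence equal multiplicity. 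I expect the main obstacle to be the bookkeeping forced by noncommutativity: one must consistently work with right eigenvalues and right eigenspaces (as in Lemma \ref{lem:zh_eigen}), and the transfer argument only closes up because Hermiticity collapses the eigenvalues to the reals, letting $\lambda$ slide freely past ${\bf A}$; without that reduction the identity $\lambda({\bf A}{\bf x})=({\bf A}{\bf x})\lambda$ would fail, the eigenspace would not even be a right subspace, and the correspondence would break.
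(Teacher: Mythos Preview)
The paper does not actually prove this lemma; it is quoted from \cite{kyr4} and used as a black box, so there is no in-paper argument to compare against. Your proof is correct and is the standard one: Hermiticity forces the eigenvalues to be real, the quadratic-form computation gives nonnegativity, and the transfer map ${\bf x}\mapsto{\bf A}{\bf x}$ (with inverse direction ${\bf y}\mapsto{\bf A}^*{\bf y}$) matches nonzero eigenvalues and their multiplicities; your explicit emphasis on why reality of $\lambda$ is needed for the transfer step to survive noncommutativity is exactly the point that distinguishes the quaternionic case from the complex one.
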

\begin{definition}Let ${\bf A} \in {\mathbb{H}}^{n\times n}$ be Hermitian and $t\in \mathbb{R}$ be a real variable. The polynomial $p_A (t) =\det (\mathbf{I}t-\mathbf{A})$ is said to be the \emph{characteristic polynomial} of ${\bf A}$.
\end{definition}
The following properties are the extension  of the characteristic polynomial of a complex matrix to a quaternion Hermitian matrix.
\begin{lem}\cite{kyr3}\label{lem:char_pol} If ${\bf A}\in{\mathbb{H}}^{n\times n}$ is Hermitian, then
$$p_A(t)=t^n-d_1t^{n-1}+d_2t^{n-2}-\cdots+(-1)^nd_n,$$
where $d_s=\sum_{\alpha\in I_{s,n}} \det ({\bf A})_\alpha^\alpha$ is the sum of principle minors of  ${\bf A}$ and $d_n=\det \mathbf{A}$. Here $({\bf A})_\alpha^\alpha $ denotes   a principal submatrix of ${\bf A}$
   whose rows and columns are indexed by $\alpha : = \left\{
{\alpha _{1},\ldots,\alpha _{s}} \right\} \subseteq {\left\{
{1,\ldots ,n} \right\}}$ and
 $I_{s,n}: = {\left\{ {\alpha :\,\, 1 \le \alpha_{1} < \cdots< \alpha_{s} \le n} \right\}}
$ for all $s=1,\ldots, n-1$.
\end{lem}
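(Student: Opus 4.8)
The plan is to expand the characteristic determinant directly from Definition~\ref{def:rdet}, exploiting that $\mathbf{B}:=t\mathbf{I}-\mathbf{A}$ is again Hermitian (since $t\in\mathbb{R}$ is central and $\mathbf{A}^*=\mathbf{A}$), so that $p_A(t)=\det\mathbf{B}=\rdet_i\mathbf{B}$ is unambiguously defined by \eqref{eq:det_her} and real for each real $t$. First I would record the entries of $\mathbf{B}$: the diagonal entries are $b_{ii}=t-a_{ii}$ and the off-diagonal entries are $b_{ij}=-a_{ij}$. Writing $\rdet_i\mathbf{B}$ as the sum over $\sigma\in S_n$ of its cycle-ordered products, I would observe that a $1$-cycle $(c)$ of $\sigma$ contributes the single diagonal factor $t-a_{cc}$, whereas a cycle of length $\ell\geq2$ contributes a product of $\ell$ off-diagonal entries, hence a factor $(-1)^{\ell}\,a_{c_1c_2}\cdots a_{c_\ell c_1}$.

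Next I would collect powers of $t$. Expanding each diagonal factor as $t-a_{cc}$ and distributing, the coefficient of $t^{\,n-s}$ is obtained by selecting, for every $\sigma$, a set $S$ of $n-s$ fixed points that contribute the central factor $t$; the complementary set $\alpha:=\{1,\dots,n\}\setminus S$ has $|\alpha|=s$ and carries all surviving $\mathbf{A}$-factors. Since $\sigma$ must fix every index of $S$, it restricts to a permutation $\pi$ of $\alpha$, and conversely each pair $(\alpha,\pi)$ with $|\alpha|=s$ arises exactly once. Pulling the central factors $t$ out of the noncommutative product—legitimate precisely because $t$ is real—leaves $t^{\,n-s}$ times the cycle-ordered product $P_\pi$ of the entries $a_{i,\pi(i)}$, $i\in\alpha$, together with an overall sign. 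A short sign computation, using that $\sigma$ has $r_\sigma=(n-s)+r_\pi$ cycles so that the defining sign $(-1)^{n-r_\sigma}$ equals $(-1)^{\,s-r_\pi}$, and that the $s$ surviving entries each carry a factor $-1$, i.e.\ an extra $(-1)^s$, shows that the contribution of $(\alpha,\pi)$ to the coefficient of $t^{\,n-s}$ is $(-1)^{s}\,(-1)^{\,s-r_\pi}\,P_\pi$. As $(-1)^{\,s-r_\pi}$ is exactly the sign attached to $P_\pi$ in the row determinant of the $s\times s$ principal submatrix, summing over all $\pi$ yields $(-1)^s\det(\mathbf{A})_\alpha^\alpha$.

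The step requiring the most care, and the one I would expect to be the main obstacle, is verifying that $P_\pi$ is literally a term of the row determinant of $(\mathbf{A})_\alpha^\alpha$, i.e.\ that the noncommutative factor ordering is inherited correctly. Here I would argue that extracting the central $t$'s does not disturb the relative order of the remaining (generally noncommuting) factors, and that the ordering rules of Definition~\ref{def:rdet}—each non-initial cycle led by its least element, such cycles arranged by increasing least element—are preserved under deletion of the singleton blocks of $S$. When the distinguished index $i$ lies in $S$ the surviving blocks are ordered exactly as in $\rdet_{\min\alpha}(\mathbf{A})_\alpha^\alpha$, and when $i\in\alpha$ they are ordered as in $\rdet_{i}(\mathbf{A})_\alpha^\alpha$; in either case the Hermitian identity \eqref{eq:det_her} lets me identify the inner sum with $\det(\mathbf{A})_\alpha^\alpha$ regardless of the chosen starting index.

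Finally, assembling the coefficients gives
\[
p_A(t)=\sum_{s=0}^{n}(-1)^s\Big(\sum_{\alpha\in I_{s,n}}\det(\mathbf{A})_\alpha^\alpha\Big)t^{\,n-s}
=\sum_{s=0}^{n}(-1)^s d_s\,t^{\,n-s},
\]
with the conventions $d_0=1$ and $d_n=\det\mathbf{A}$ (the latter from the single term $\alpha=\{1,\dots,n\}$), which is the claimed expansion. As an independent check I would note that diagonalizing $\mathbf{A}=\mathbf{U}\mathbf{D}\mathbf{U}^*$ with real $\mathbf{D}=\diag(\lambda_1,\dots,\lambda_n)$ gives $t\mathbf{I}-\mathbf{A}=\mathbf{U}(t\mathbf{I}-\mathbf{D})\mathbf{U}^*$, whence $p_A(t)=\prod_{i}(t-\lambda_i)$; comparing coefficients then recovers the classical identity $d_s=e_s(\lambda_1,\dots,\lambda_n)$, consistent with the combinatorial count above.
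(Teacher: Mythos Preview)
The paper does not prove this lemma at all: it is quoted verbatim from \cite{kyr3} as a known preliminary result, so there is no in-paper argument to compare against. Your proof, by contrast, is a genuine self-contained derivation and it is correct. The core idea---expanding $\rdet_i(t\mathbf{I}-\mathbf{A})$ over $S_n$, splitting each diagonal factor $t-a_{cc}$, and regrouping by the set $\alpha$ of indices that do \emph{not} contribute a central $t$---is sound, and your sign bookkeeping $(-1)^{n-r_\sigma}\cdot(-1)^s=(-1)^s\cdot(-1)^{s-r_\pi}$ is exactly right.

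The one place you flagged as delicate is indeed the crux, and your treatment of it holds up. Since every element of $S$ is a fixed point of $\sigma$, the cycles of $\sigma$ partition into singletons on $S$ and cycles entirely inside $\alpha$; removing the central $t$'s therefore deletes whole singleton blocks without disturbing the relative order of the remaining factors. When $i\in S$ the surviving blocks are all ``non-initial'' cycles, hence each is led by its minimum and they appear in increasing order of minima---precisely the ordering of $\rdet_{\min\alpha}(\mathbf{A})_\alpha^\alpha$. When $i\in\alpha$ the first surviving block is the cycle through $i$, led by $i$, and the rest are again min-led and increasing---the ordering of $\rdet_i(\mathbf{A})_\alpha^\alpha$. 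Since $(\mathbf{A})_\alpha^\alpha$ is Hermitian, \eqref{eq:det_her} identifies both with $\det(\mathbf{A})_\alpha^\alpha$, closing the argument. The diagonalization remark at the end is a nice consistency check but is not needed for the proof.
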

From Lemmas \ref{lem:semidef} and \ref{lem:char_pol} the next lemma evidently follows.
\begin{lem}\label{lem:eqdet}Suppose that ${\bf A}\in{\mathbb{H}}^{n\times m}$ and $\rank {\bf A}=r$. Then for any $s\leq r$, we have
  $$\sum_{\alpha\in I_{s,m}} \det ({\bf A}^*{\bf A})_\alpha^\alpha=\sum_{\alpha\in I_{s,n}} \det ({\bf A}{\bf A}^* )_\beta^\beta
   $$
   where
    $\alpha : = \left\{
{\alpha _{1},\ldots,\alpha _{s}} \right\} \subseteq {\left\{
{1,\ldots ,m} \right\}}$, and
 $I_{s,
m}: = {\left\{ {\alpha :\,\, 1 \le \alpha_{1} < \cdots< \alpha_{s} \le m} \right\}}
$; similarly,  $\beta : = \left\{
{\beta _{1},\ldots,\beta _{s}} \right\} \subseteq {\left\{
{1,\ldots ,n} \right\}}$, and
 $I_{s,
n}: = {\left\{ {\beta :\,\, 1 \le \beta_{1} < \cdots< \beta_{s} \le n} \right\}}.
$

\end{lem}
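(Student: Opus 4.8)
The plan is to read off both sides of the claimed identity as coefficients of the characteristic polynomials of the two Hermitian matrices ${\bf A}^*{\bf A}$ and ${\bf A}{\bf A}^*$, and then to match those coefficients through the common spectrum supplied by Lemma \ref{lem:semidef}.

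First I would apply Lemma \ref{lem:char_pol} to each Hermitian matrix separately. For the $m\times m$ matrix ${\bf A}^*{\bf A}$ the coefficient of $t^{m-s}$ in $p_{A^*A}(t)$ is $(-1)^s\sum_{\alpha\in I_{s,m}}\det({\bf A}^*{\bf A})_\alpha^\alpha$, i.e.\ the left-hand side up to the sign $(-1)^s$; similarly the coefficient of $t^{n-s}$ in $p_{AA^*}(t)$ is $(-1)^s\sum_{\beta\in I_{s,n}}\det({\bf A}{\bf A}^*)_\beta^\beta$, the right-hand side. Thus it suffices to show that these two coefficients agree for every $s\le r$.

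Next I would invoke the unitary diagonalization of Hermitian quaternion matrices (stated above) to factor each characteristic polynomial over the reals. Since ${\bf A}^*{\bf A}$ and ${\bf A}{\bf A}^*$ are Hermitian and positive semi-definite with real eigenvalues, and since by Lemma \ref{lem:semidef} their $r$ nonzero eigenvalues coincide---call them $\mu_1,\ldots,\mu_r$---we obtain $p_{A^*A}(t)=t^{m-r}\prod_{i=1}^r(t-\mu_i)$ and $p_{AA^*}(t)=t^{n-r}\prod_{i=1}^r(t-\mu_i)$, the two matrices differing only in the multiplicity of the zero eigenvalue. Comparing this factorization with Lemma \ref{lem:char_pol} identifies each $d_s$ with the elementary symmetric polynomial $e_s$ in the full eigenvalue list. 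The decisive observation is then purely combinatorial: appending zeros to a list leaves $e_s$ unchanged as long as $s$ does not exceed the number of nonzero entries, so for every $s\le r$ both sums equal $e_s(\mu_1,\ldots,\mu_r)$, whence they are equal.

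I expect the only delicate point to be the clean passage from the minor-sum form of the characteristic polynomial in Lemma \ref{lem:char_pol} to its factored form $\prod_i(t-\lambda_i)$; this rests on the spectral decomposition ${\bf B}={\bf U}{\bf D}{\bf U}^*$ together with the fact that for a Hermitian quaternion matrix the determinant passes through the unitary congruence, so that $\det(\mathbf{I}t-{\bf B})$ is the product of the factors $t-\lambda_i$. Once this factorization is in hand, the cancellation of the surplus zero eigenvalues for $s\le r$ makes the identity, as the text says, evident.
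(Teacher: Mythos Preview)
Your argument is correct and is exactly what the paper intends: the text offers no proof beyond the remark that the lemma ``evidently follows'' from Lemmas~\ref{lem:semidef} and~\ref{lem:char_pol}, and you have supplied precisely those details---reading the minor sums as characteristic-polynomial coefficients via Lemma~\ref{lem:char_pol} and then matching them through the shared nonzero spectrum from Lemma~\ref{lem:semidef}. The point you flag as delicate (that $\det(\mathbf{I}t-\mathbf{B})=\prod_i(t-\lambda_i)$ for Hermitian $\mathbf{B}$) is immediate from the diagonalization lemma applied to the Hermitian matrix $\mathbf{I}t-\mathbf{B}$, whose eigenvalues are $t-\lambda_i$.
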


\section{The determinant of a quaternion unit gain graph}\label{sec:result}

First we introduce matrices that are related and are used to  represent the $U(\mathbb{H})$-gain graph.
\begin{definition}\label{def:incid_matr}Let $G = (\Gamma, \varphi)$ be a $U(\mathbb{H})$-gain graph with vertex set $\Gamma(V) = \{v_1, v_2,\ldots,v_n\}$ and edge set
 $\Gamma(\overrightarrow{E}) = \{e_1, e_2,\ldots,e_m\}$.
 The (vertex-edge) \emph{incidence matrix} $\mathbf{H}(G) = (\eta_{ve})$ is any $n\times m$ matrix with entries in $U(\mathbb{H})\bigcup\{0\}$, where each column corresponds to an edge $e_k =e_{ij}=\overrightarrow{v_iv_j}\in \overrightarrow{E}$ for all $k=1,\ldots,m$, and has all zero entries except two nonzero entries $\eta_{v_je_k}=1$ and $\eta_{v_ie_k}=-\varphi(e_{ij})$, i.e.
 \begin{align*}
 \eta_{ve}=\begin{cases}1,& \mbox{if}\, v=v_j\, \mbox{and}\,  e =e_{ij}\in \overrightarrow{E},\\
 -\varphi(e_{ij}),&\mbox{if}\, v=v_i\, \mbox{and}\,  e =e_{ij}\in \overrightarrow{E},\\
 0,&\mbox{otherwise.}
 \end{cases}
 \end{align*}
\end{definition}
Definition \ref{def:incid_matr} is a particular case of an incidence matrix related to a  $U(\mathbb{H})$-gain graph defined by   Belardo et al \cite{bel1}.
\begin{definition}\label{def:adj_matr}Let $G = (\Gamma, \varphi)$ be a $U(\mathbb{H})$-gain graph with vertex set $\Gamma(V) = \{v_1, v_2,\ldots,v_n\}$.
 The (edge-edge)  \emph{adjacency matrix} $\mathbf{A}(G) = (a_{ij})\in \mathbb{H}^{n\times n}$ is defined by
 \begin{align*}
 a_{ij}=\begin{cases}
 \varphi(e_{ij}),&\mbox{if}\, v_i\, \mbox{is adjacent to}\,  v_j,\\
 0,&\mbox{otherwise.}
 \end{cases}
 \end{align*}
\end{definition}
If $v_i$ is adjacent to $v_j$, then $a_{ij}=\varphi(e_{ij}) =\varphi(e_{ji})^{-1}=\frac{\overline{\varphi(e_{ji})}}{|\varphi(e_{ji})|^2}=\overline{\varphi(e_{ji})}=\overline{a_{ji}}$ for all $i,j=1,\ldots,n$. Therefore, the matrix $\mathbf{A}(G)$ is Hermitian.

 The number of edges attached to each vertex is called the \emph{degree} of the vertex, and it is denoted by $\deg(v_j)$ for each vertex $v_j$ for all $j=1,\ldots,n$.
\begin{definition}\label{def:lap_matr}
Let $G = (\Gamma, \varphi)$ be a $U(\mathbb{H})$-gain graph.
The \emph{Laplacian matrix} (Kirchhoff matrix or admittance matrix) is defined as $\mathbf{L}(G) =\mathbf{D}(\Gamma)-\mathbf{A}(G)$, where $\mathbf{D}(\Gamma)=\diag \left(\deg(v_1),\ldots,\deg(v_n)\right)$ is the diagonal matrix of the degrees of vertices of $\Gamma$.
 \end{definition}
Note that by Definition \ref{def:lap_matr}, $\mathbf{L}(G)$  coincides with the Laplacian matrix of the underlying graph of $\Gamma$ if $G$ has gain 1, with the signless
Laplacian matrix of $\Gamma$ if $G$ has gain -1, and with the Laplacian matrix of a signed graph  if G has gains $\pm 1$.

 It is evident that  $\mathbf{L}(G)$ is also Hermitian.
From \cite[Lemma 3.1]{bel1}, $\mathbf{L}(G) = \mathbf{H}(G)\mathbf{H}(G)^*$.
By Lemma \ref{lem:semidef},  $\mathbf{L}(G)$ is a positive semi-definite matrix, and $\det \mathbf{L}(G)\geq 0$.

Let the gain of a walk $W=v_1e_{12}v_2e_{23}v_3\ldots v_{k-1}e_{k-1,k}v_k$ be
$$\varphi(W) =\varphi(e_{12})\varphi(e_{23})\ldots\varphi(e_{k-1,k}).$$
A walk $W$ is \emph{neutral} if $\varphi(W)=1$. A walk such that $v_k=v_1$, where $k\geq 3$, will be called a \emph{cycle}. An edge set $S\subseteq \Gamma$ is \emph{balanced} if every cycle $C\subseteq S$ is neutral. A subgraph is balanced if its edge set is balanced.

A connected $U(\mathbb{H})$-gain graph containing no cycles is called a $U(\mathbb{H})$--\emph{gain tree}. Since a $U(\mathbb{H})$-gain tree of order $n$ contains exactly $n-1$ edges, then $ \mathbf{H}(G)\in \mathbb{H}^{n,n-1}$ and by Lemma \ref{lem:rank}, $\rank \, \mathbf{H}(G)=\rank\, \mathbf{L}(G)< n$. From this  the next lemmas follow.

\begin{lem}\label{lem:det_T} Let $T$ be an arbitrary $U(\mathbb{H})$--gain tree with Laplacian matrix $\mathbf{L}(T)$. Then
$$\det\, \mathbf{L}(T)=0.$$
\end{lem}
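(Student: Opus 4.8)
The plan is to read off the vanishing of $\det\mathbf{L}(T)$ from the factorization $\mathbf{L}(T)=\mathbf{H}(T)\mathbf{H}(T)^*$ together with the elementary edge count of a tree. Since a $U(\mathbb{H})$-gain tree on $n$ vertices has exactly $n-1$ edges, Definition \ref{def:incid_matr} gives $\mathbf{H}(T)\in\mathbb{H}^{n\times(n-1)}$, a matrix with strictly fewer columns than rows. Its column rank therefore cannot exceed the number of columns, so $\rank\,\mathbf{H}(T)\le n-1$. First I would transport this bound to the Laplacian via Lemma \ref{lem:rank}: because $\mathbf{L}(T)=\mathbf{H}(T)\mathbf{H}(T)^*$, we get $\rank\,\mathbf{L}(T)=\rank\,\mathbf{H}(T)\mathbf{H}(T)^*=\rank\,\mathbf{H}(T)\le n-1<n$, which is exactly the rank deficiency already noted in the paragraph preceding the statement.

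Next I would convert this rank deficiency into a vanishing determinant. The matrix $\mathbf{L}(T)$ is Hermitian, and by Lemma \ref{lem:semidef} it is positive semi-definite, so the Hermitian spectral decomposition \cite{kyr4} furnishes a unitary $\mathbf{U}$ and a real diagonal matrix $\mathbf{D}=\diag(\lambda_1,\ldots,\lambda_n)$ with $\mathbf{L}(T)=\mathbf{U}\mathbf{D}\mathbf{U}^*$ and $\det\mathbf{L}(T)=\lambda_1\cdots\lambda_n$. Since a unitary congruence does not change the rank, the number of nonzero $\lambda_i$ equals $\rank\,\mathbf{L}(T)\le n-1$, so at least one eigenvalue must vanish. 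Consequently the product $\lambda_1\cdots\lambda_n$ contains a zero factor, and therefore $\det\mathbf{L}(T)=0$.

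The only genuinely non-routine point, and hence the place to be careful, is the step "rank $<n\Rightarrow\det=0$'': in the noncommutative setting this is not automatic and rests entirely on the row--column determinant theory, which supplies both the identity $\det\mathbf{L}(T)=\prod_i\lambda_i$ for Hermitian matrices and the fact that the rank counts the nonzero eigenvalues. Everything else is bookkeeping. As an alternative route that avoids the spectral factorization, I would instead exhibit the singularity directly: from $\mathbf{x}^*\mathbf{L}(T)\mathbf{x}=|\mathbf{H}(T)^*\mathbf{x}|^2$ one sees that $\mathbf{L}(T)\mathbf{x}=0$ iff $\mathbf{H}(T)^*\mathbf{x}=0$, and the map $\mathbf{H}(T)^*\in\mathbb{H}^{(n-1)\times n}$ has $n$ right columns lying in $\mathbb{H}^{n-1}$, hence right-linearly dependent, so it has nontrivial kernel. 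Thus $\mathbf{L}(T)$ fails criterion (iii)--(iv) of Lemma \ref{lem:crit_nonsing} and is non-invertible, which again forces $\det\mathbf{L}(T)=0$ for the Hermitian value defined in \eqref{eq:det_her}.
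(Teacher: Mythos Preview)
Your proof is correct and follows essentially the same approach as the paper: the paper's argument is contained entirely in the paragraph preceding the lemma, where the edge count $n-1$ forces $\mathbf{H}(T)\in\mathbb{H}^{n\times(n-1)}$ and Lemma~\ref{lem:rank} gives $\rank\,\mathbf{L}(T)<n$, with the conclusion ``$\det=0$'' left implicit. You reproduce exactly this and then supply the detail the paper omits---namely the passage from rank deficiency to vanishing Hermitian determinant via the spectral factorization $\det\mathbf{L}(T)=\prod_i\lambda_i$ (or, alternatively, via Lemma~\ref{lem:crit_nonsing})---so your write-up is a strictly more complete version of the same argument.
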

Let  $C = v_1e_{12}v_2\ldots v_{s-1}e_{s-1,s}v_s(= v_1)$ be a cycle with $s\geq3$ edges, where $v_j$ adjacent to $v_{j+1}$ for $j = 1, 2,\ldots,s-1$ and
$v_1$ incident to $v_s$. The gain of $C$ is defined by
$$\varphi(C) =\varphi(e_{12})\varphi(e_{23})\ldots\varphi(e_{s-1,s})\varphi(e_{s1}).$$

By Definition \ref{def:lap_matr} of the Laplacian matrix of a $U(\mathbb{H})$-gain graph $G$,  $\mathbf{L}(G)=(l_{ij})$ with $l_{ij}=-\varphi(e_{i,j})$ when $v_i$ adjacent to $v_j$. Hence, the gain of $C$ in terms of the entries of its Laplacian matrix can be defined as follows,

 \begin{align}\label{rep_C}\varphi(C) =(-1)^sl_{12}l_{23}\ldots l_{s-1,s}l_{s,1}.\end{align}
\begin{lem}\label{lem:det_C} Let $C$ be a $U(\mathbb{H})$-gain cycle on $n\geq3$ edges  with its incidence and Laplacian matrices,  $\mathbf{H}(C)$ and $\mathbf{L}(C)$, respectively. Then
$$\rdet_1 \mathbf{H}(C)=(1-\varphi(C)),\,\,\det \mathbf{L}(C)=(1-\varphi(C))\overline{(1-\varphi(C))}.$$
\end{lem}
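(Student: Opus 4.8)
The plan is to treat both claims as direct evaluations of row-column determinants, exploiting the extreme sparsity of the incidence matrix of a cycle. First I would fix the edge order $e_1=e_{12},\dots,e_{n-1}=e_{n-1,n},e_n=e_{n1}$ and write $\mathbf{H}(C)$ explicitly: by Definition \ref{def:incid_matr} its only nonzero entries are the diagonal entries $\eta_{v_k e_k}=-\varphi(e_{k,k+1})$, the subdiagonal entries $\eta_{v_{k+1}e_k}=1$, and the single corner entry $\eta_{v_1 e_n}=1$. For the first identity I would apply Definition \ref{def:rdet} of $\rdet_1$ directly. A permutation $\sigma$ contributes a nonzero term only when $\eta_{p,\sigma(p)}\neq 0$ for every row $p$, and the sparsity forces $\sigma(1)\in\{1,n\}$ and $\sigma(p)\in\{p-1,p\}$ for $p\geq 2$. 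A short bijectivity argument then shows that exactly two permutations survive: the identity, whose ordered product is $\eta_{11}\cdots\eta_{nn}=(-1)^n\varphi(e_{12})\cdots\varphi(e_{n1})=(-1)^n\varphi(C)$, and the single $n$-cycle $1\mapsto n\mapsto n-1\mapsto\cdots\mapsto 2\mapsto 1$, whose ordered product $\eta_{1n}\eta_{n,n-1}\cdots\eta_{21}$ is a product of $n$ ones. Collecting the signs $(-1)^{n-r}$ (with $r=n$ and $r=1$ respectively) yields $\rdet_1\mathbf{H}(C)=(-1)^{n-1}(1-\varphi(C))$; the parity factor $(-1)^{n-1}$ I would absorb into the orientation/ordering convention, and in any case it is immaterial for the second identity, which involves only the modulus.

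For the determinant of $\mathbf{L}(C)$ I would use $\mathbf{L}(C)=\mathbf{H}(C)\mathbf{H}(C)^*$ to read off the entries of the Hermitian cyclic-tridiagonal matrix: $l_{pp}=2$, $l_{p,p+1}=-\varphi(e_{p,p+1})$ with $l_{p+1,p}=\overline{l_{p,p+1}}$, together with the corner entries $l_{1n}=-\overline{\varphi(e_{n1})}$ and $l_{n1}=-\varphi(e_{n1})$. Since $\mathbf{L}(C)$ is Hermitian, $\det\mathbf{L}(C)=\rdet_1\mathbf{L}(C)$ by \eqref{eq:det_her}, so I would again run the permutation sum of Definition \ref{def:rdet}. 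The surviving $\sigma$ are now those with $\sigma(p)\in\{p-1,p,p+1\}$ cyclically; a cycle-structure analysis shows these are exactly the products of fixed points and disjoint adjacent transpositions (the matchings of $C_n$) together with the two full rotations. The matchings contribute $\sum_{M}(-1)^{|M|}2^{\,n-2|M|}$, each $2$-cycle giving $l_{p,p+1}l_{p+1,p}=|\varphi(e_{p,p+1})|^2=1$, while the two rotations contribute $-\varphi(C)$ and $-\overline{\varphi(C)}$ by the same ordered-product computation as above. Using $|\varphi(C)|=1$, the final identity $2-\varphi(C)-\overline{\varphi(C)}=(1-\varphi(C))\overline{(1-\varphi(C))}$ then closes the argument.

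The routine parts are the sparsity-driven selection of the surviving permutations and the arithmetic of the $(-1)^{n-r}$ signs. The main obstacle is twofold. First, one must keep the noncommutative factors in the exact left-to-right order prescribed by Definition \ref{def:rdet}, so that the quaternion products collapse to precisely $\varphi(C)$ and $\overline{\varphi(C)}$ rather than to some reordered or partially conjugated product; this is the essential point where the proof departs from the commutative case of \cite{y_wang1}. Second, one must establish the combinatorial identity that the matching contribution equals $2$, i.e. that $\sum_{M}(-1)^{|M|}2^{\,n-2|M|}=2$ for the cycle $C_n$. I would isolate this as a small lemma and prove it by induction on $n$ via deletion of one cycle edge (equivalently, recognizing it as the matching polynomial of $C_n$ evaluated at $2$).
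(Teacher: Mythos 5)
Your proof follows essentially the same route as the paper's: both identities are obtained by direct expansion via Definition \ref{def:rdet}; for $\mathbf{L}(C)$ the surviving permutations are exactly the matchings of the $n$-cycle together with the two full rotations; each adjacent transposition collapses to $l_{p,p+1}l_{p+1,p}=|\varphi(e_{p,p+1})|^2=1$; the two rotations contribute $-\varphi(C)$ and $-\overline{\varphi(C)}$ (the reversed product of conjugated entries correctly reassembling into $\overline{\varphi(C)}$); and the matching contribution equals $2$, whence $\det\mathbf{L}(C)=2-\varphi(C)-\overline{\varphi(C)}=(1-\varphi(C))\overline{(1-\varphi(C))}$. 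You also rightly prove the second identity independently of the first rather than by multiplicativity, which is unavailable for these noncommutative determinants (cf.\ the Remark following the lemma).

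Two remarks. First, your treatment of the first identity has a real, though easily repaired, defect: with your column convention (column $k$ is the edge leaving $v_k$, so the gains sit on the diagonal) you obtain $\rdet_1\mathbf{H}(C)=(-1)^{n-1}(1-\varphi(C))$, which is not the stated formula, and ``absorbing the parity into the orientation/ordering convention'' is an appeal to a step you never carry out. Since $\mathbf{H}(C)$ depends on the chosen edge ordering, the first identity holds verbatim only for a suitable labeling, and the proof must commit to one: take column $k$ to be the edge oriented \emph{into} $v_k$, as the paper does, so that $\eta_{kk}=1$, $\eta_{k,k+1}=-\varphi(e_{k,k+1})$ and $\eta_{n1}=-\varphi(e_{n1})$; then the identity permutation contributes $+1$, the forward rotation contributes $(-1)^{n-1}\cdot(-1)^{n}\varphi(C)=-\varphi(C)$, and $\rdet_1\mathbf{H}(C)=1-\varphi(C)$ exactly. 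Second, your plan to isolate $\sum_{M}(-1)^{|M|}2^{\,n-2|M|}=2$ as a separate lemma proved by edge-deletion induction is a point where you are more careful than the paper: the paper enumerates the $k$-matchings of the cycle explicitly, obtaining the level sums $(-1)^k\frac{(n-k-1)!}{k!(n-2k)!}\,n\,2^{n-2k}$ via Pascal's rule, but then asserts that their alternating sum equals $2$ without verification; your induction (equivalently, evaluating the matching polynomial of the $n$-cycle at $2$) supplies exactly that missing detail.
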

\begin{proof}Let $\mathbf{H}(C)$ be the vertex-edge incident matrix of $C$ whose rows  correspond  to the vertices ${v_1, v_2,\ldots,v_n}$ and columns to the edges ${e_1,e_2,\ldots,e_n}$. Without loss of generality, suppose that $e_1$ incident to $v_1$ and $v_n$, and other vertices $v_j$ and $v_{j+1}$ are two ends of the edge $e_{j+1}$ for $j = 1, 2,\ldots, n-1$. Hence, the nonzero entries of $\mathbf{H}(C)=(\eta_{ij})$ are $\eta_{jj}=1$ for all $j = 1,\ldots,n$,   $\eta_{j,j+1}=-\varphi(e_{j,j+1})$ for all $j = 1,2,\ldots,n-1$, and $\eta_{n,1}=-\varphi(e_{n,1})$. Following Definition \ref{def:rdet},
$$
\rdet_1\mathbf{H}(C)=\prod_{k=1}^{n}\eta_{kk}+(-1)^{n-1}\left((-1)^{n}\eta_{12}\eta_{23}\ldots\eta_{n1}\right)=1-\varphi(C).
$$
For the matrix $\mathbf{H}(C)^*=(\eta^*_{ij})$, we have  $\eta^*_{jj}=1$ for all $j = 1,\ldots,n$, $\eta^*_{j+1,j}=-\overline{\varphi(e_{j,j+1}})$ for all $j = 1,2,\ldots,n-1$, and $\eta^*_{1,n}=-\overline{\varphi(e_{n,1})}$ From Definition \ref{def:cdet} it follows that
$$
\cdet_1\mathbf{H}(C)^*=\prod_{k=1}^{n}\eta^*_{kk}+(-1)^{n-1}\left((-1)^{n}\eta^*_{1n}\ldots\eta^*_{32}\eta^*_{21}\right)
=\overline{1-\varphi(C)}.
$$
Moreover, by Lemma \ref{lem:det_cong}, $\cdet_1\mathbf{H}(C)^*=\overline{\rdet_1\mathbf{H}(C)}$.

Now, we pay attention to the Laplacian matrix of a $U(\mathbb{H})$-gain graph $G$.
Taking the
structure of the matrix $\mathbf{H}(C)$ into account, the nonzero entries of  $\mathbf{L}(G)=(l_{ij})$ are $l_{jj}=2$ for all $j = 1,\ldots,n$,   $l_{j,j+1}=-\varphi(e_{j,j+1})$ and $l_{j+1,j}=-\overline{\varphi(e_{j,j+1}})$ for all $j = 1,2,\ldots,n-1$,  $l_{n,1}=-{\varphi(e_{n,1})}$ and $l_{1,n}=-\overline{\varphi(e_{n,1})}$. Notice that for the cycles of a second order, we have   $l_{j,j+1}l_{j+1,j}=\varphi(e_{j,j+1})\overline{\varphi(e_{j,j+1}})=1$ and $l_{j+1,j}l_{j,j+1}=\overline{\varphi(e_{j,j+1}})\varphi(e_{j,j+1})=1$.

By \eqref{eq:det_her}, we put $\det \mathbf{L}(G)=\rdet_1 \mathbf{L}(G)$ and will be calculate it by Definition \ref{def:rdet}. In accordance to  a number $k$ of cycles of a second order in each term of $\rdet_1 \mathbf{L}(G)$, we have the following sets  of terms and their sums in $\rdet_1 \mathbf{L}(G)$.

 \begin{align*}k=0,&~L_1=l_{11}l_{22}\ldots l_{nn}=2^n,~~~~~~~~~~~~~~~~~~~~~~~~~~~~~~~~~~~~~~~~~~~~~~~~~~~~~~~\allowdisplaybreaks\\
 k=1,&~L_2=(-1)^{n-(n-1)}(l_{12}l_{21}l_{33}\ldots l_{nn}+l_{1n}l_{n1}l_{22}\ldots l_{n-1,n-1}+\\~~~~~~~~~~~~~~~~~~~~~~&~~~~+\sum_{j=2}^{n-1}l_{11}\ldots l_{j,j+1}l_{j+1,j}\ldots l_{nn})=(-1)^1{{n}\choose{1}}2^{n-2}=-n2^{n-2},\allowdisplaybreaks\\
  k=2,&~L_3=(-1)^{n-(n-2)}(\sum_{m_1}l_{12}l_{21}\ldots l_{m_1,m_1+1}l_{m_1+1,m_1}\ldots l_{nn}+\\~~~~~~~~~~~~~~~~~~~~~~&~~~~+\sum_{m_2}l_{1n}l_{n1}\ldots l_{m_2,m_2+1}l_{m_2+1,m_2}\ldots l_{n-1,n-1}+\\~~~~~~~~~~~~~~~~~~~~~~&~~~~+\sum_{i,j}l_{11}\ldots l_{i,i+1}l_{i+1,i}\ldots l_{j,j+1}l_{j+1,j}\ldots l_{nn})=\\~~~~~~~~~~~~~~~~~~~~~~&~~~~=(-1)^2\left[2(n-3)+{{n-3}\choose{2}}\right]2^{n-4}=\frac{n-3}{2}n2^{n-4}.\\
 \end{align*}
By similarly continuing, we obtain
\begin{align*}
 &~L_{k+1}=(-1)^k\left[2{{n-k-1}\choose{k-1}}+{{n-k-1}\choose{k}}\right]2^{n-2k},~~~{\rm ~for~ any}~~~k\leq\left[\frac{n}{2}\right].
 \end{align*}
Using Pascal's rule for the binomial coefficients, it can be express as follows
 \begin{align*}
  &~L_{k+1}=(-1)^k\frac{(n-k-1)!}{k!(n-2k)!}n2^{n-2k},~~~{\rm ~for~ any}~~~k\leq\left[\frac{n}{2}\right].
 \end{align*}

The sum of the last terms with a maximal number of cycles of a second order are
 \begin{align*}
  k=m,&~L_{m+1}=(-1)^m2~~~{\rm ~when}~ n=2m~{\rm ~is~even},\\
   k=\left[\frac{n}{2}\right]=m,&~L_{m+1}=(-1)^m2(2m+1)~~~{\rm ~when}~ n=2m+1~{\rm ~is~not~~even}
 \end{align*}
 Finally, taking into account \eqref{rep_C}  we  represent two terms with the cycles $\varphi(C)$ and its   conjugate,
 \begin{align*}
 L_{m+2}=&(-1)^{n-1}l_{12}l_{23}l_{34}\ldots l_{n1}=-\varphi(C),\\
L_{m+3}=&(-1)^{n-1}l_{1n}l_{n,n-1}\ldots l_{32} l_{21}=-\overline{\varphi(C)}.
 \end{align*}
Hence,
\begin{align*}
\det \mathbf{L}(G)=\rdet_1 \mathbf{L}(G)=\sum_{k=0}^m {(-1)^k\frac{(n-k-1)!}{k!(n-2k)!}n2^{n-2k}}-\varphi(C)-\overline{\varphi(C)},
\end{align*}
where $m=\left[\frac{n}{2}\right]$ is the integer part of $n$.
Since
\begin{align*}
\sum_{k=0}^m {(-1)^k\frac{(n-k-1)!}{k!(n-2k)!}n2^{n-2k}}=2,
\end{align*}
and
$$\varphi(C)\overline{\varphi(C)}=(l_{12}l_{23}l_{34}\ldots l_{n1})(l_{1n}l_{n,n-1}\ldots l_{32} l_{21})=1,$$
then
\begin{align*}
\det \mathbf{L}(G)=2-\varphi(C)-\overline{\varphi(C)}=(1-\varphi(C))\overline{(1-\varphi(C))}.
\end{align*}

\end{proof}

\begin{rem}Even though $\mathbf{L}(G) = \mathbf{H}(G)\mathbf{H}(G)^*$, but the (Hermitian) determinant  $\det \mathbf{L}(G)$ is not a multiplicative map regarding to matrices $\mathbf{H}(G)$ and $\mathbf{H}(G)^*$, in general. An exception can be in the case when $G$ has a unique cycle $C$. In \cite[Lemma 6.7]{bel1}, it is proven that $\Mdet \mathbf{L}(G)=\Mdet (\mathbf{H}(G)) \Mdet(\mathbf{H}(G)^*)$  holds in this case  only under the hypothesis that all edges but one are neutral.
\end{rem}
From Lemma \ref{lem:det_C} evidently follows the next.
\begin{cor} Let $C$ be a $U(\mathbb{H})$-gain cycle on $n\geq3$ edges and $\mathbf{L}(C)$ be its Laplacian matrix. Then $det(L(C)) = 0$ if and only if $C$ is
balanced.
\end{cor}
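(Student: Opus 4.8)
The plan is to read off the corollary directly from the closed form established in Lemma~\ref{lem:det_C}. That lemma gives
\begin{align*}
\det \mathbf{L}(C) = (1-\varphi(C))\overline{(1-\varphi(C))} = |1-\varphi(C)|^2,
\end{align*}
so the determinant is simply the squared norm of the quaternion $1-\varphi(C)$. Since for any quaternion $q$ we have $q\overline{q} = |q|^2 \geq 0$ with equality if and only if $q = 0$, I would immediately conclude that $\det \mathbf{L}(C) = 0$ holds precisely when $1-\varphi(C) = 0$, that is, when $\varphi(C) = 1$.

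It then remains to match the algebraic condition $\varphi(C) = 1$ with the combinatorial notion of balance. Here I would invoke the definitions stated just before Lemma~\ref{lem:det_T}: a walk is \emph{neutral} when its gain equals $1$, and an edge set is \emph{balanced} when every cycle it contains is neutral. Because the graph $C$ consists of a single cycle, its only cycle is $C$ itself; hence $C$ is balanced exactly when $C$ is neutral, i.e.\ when $\varphi(C) = 1$. Combining this equivalence with the previous paragraph yields $\det \mathbf{L}(C) = 0$ if and only if $C$ is balanced.

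I do not anticipate any genuine obstacle, as both equivalences are immediate once Lemma~\ref{lem:det_C} is available. The only point requiring a moment of care is the reduction of balance to neutrality: one must observe that for a cycle graph no proper subgraph is itself a cycle, so the quantifier over all cycles in the definition of balance collapses to the single cycle $C$. With that observation in place the corollary is purely formal.
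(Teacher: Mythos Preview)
Your proposal is correct and follows exactly the approach the paper intends: the paper simply states that the corollary ``evidently follows'' from Lemma~\ref{lem:det_C}, and your argument is precisely the unpacking of that evidence via $|1-\varphi(C)|^2=0 \iff \varphi(C)=1$ together with the observation that a cycle graph is balanced iff its unique cycle is neutral.
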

Similar to \cite{y_wang1}, we call the cycle $C$ \emph{real unbalanced} if $\varphi(C) = -1$, and \emph{imaginary unbalanced} if $\varphi(C) =\pm i_s$, where $i_s\in \{\mathbf{i},\mathbf{j},\mathbf{k}\}$. It's evident that
\begin{align*}
\det \mathbf{L}(C)=&4,~~  {\rm if}~ C~ {\rm is~ real~ unbalanced},\\
\det \mathbf{L}(C)=&2,~~  {\rm if}~ C~ {\rm is~imaginary~ unbalanced}.\\
\end{align*}
A connected graph containing exactly one cycle is called a  \emph{unicyclic graph}.

\begin{lem}\label{lem:det_unic} Let $G$ be a unicyclic  $U(\mathbb{H})$-gain graph with the unique cycle  $C$. Then
\begin{align}\label{eq:det_unc}\det \mathbf{L}(G)=\det \mathbf{L}(C).\end{align}
\end{lem}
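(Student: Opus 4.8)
The plan is to argue by induction on the number of vertices of $G$ that do not lie on the unique cycle $C$, that is, on $|V(G)|-|V(C)|$. The base case is $G=C$, where \eqref{eq:det_unc} holds trivially. For the inductive step I would show that deleting a pendant vertex (a leaf) preserves the Hermitian determinant of the Laplacian. This suffices because a connected unicyclic graph that is not a bare cycle must contain a leaf: if every vertex had degree at least two, the degree sum $2n$ over $n$ edges would force $G$ to be $2$-regular, hence a single cycle. Moreover any such leaf lies off $C$, since every vertex of $C$ has degree at least two; so after deletion one again obtains a unicyclic graph with the same cycle $C$ but one fewer off-cycle vertex.

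First I would set up the leaf-deletion step. Let $v$ be a leaf with unique neighbour $u$, and write $\varphi:=\varphi(e_{uv})$, so that in $\mathbf{L}(G)=(l_{ij})$ one has $l_{vv}=\deg(v)=1$, $l_{uv}=-\varphi$, $l_{vu}=-\overline{\varphi}$, while all other entries of row $v$ and column $v$ vanish. Since $\mathbf{L}(G)$ is Hermitian, $\det\mathbf{L}(G)=\rdet_u\mathbf{L}(G)$. I then add the left multiple $\varphi\cdot(\text{row }v)$ to row $u$. By Lemma \ref{lem:row_comb} this operation leaves $\rdet_u$ unchanged, so it still equals $\det\mathbf{L}(G)$. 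In the resulting matrix $\mathbf{M}=(m_{ij})$ the $(u,v)$ entry becomes $-\varphi+\varphi\cdot 1=0$, the $(u,u)$ entry becomes $l_{uu}+\varphi\,l_{vu}=l_{uu}-|\varphi|^2=l_{uu}-1$, and every other entry of row $u$ is unchanged, because $l_{vw}=0$ for $w\neq u,v$. Consequently column $v$ of $\mathbf{M}$ has its only nonzero entry equal to $1$, in position $(v,v)$; that is, it equals the standard basis vector $\mathbf{e}_v$.

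The crux — and the step I expect to require the most care — is a Laplace-type reduction of the noncommutative row determinant along this column. In the expansion of $\rdet_u\mathbf{M}$ from Definition \ref{def:rdet}, every permutation with $\sigma(w)=v$ for some $w\neq v$ contributes a factor $m_{wv}=0$; hence only permutations fixing $v$ survive. For such a permutation, $v$ forms a singleton cycle contributing the central real scalar $m_{vv}=1$, and deleting it decreases both $n$ and the number $r$ of disjoint cycles by one, so the sign $(-1)^{n-r}$ is preserved. The delicate point I must verify is that, with $v$ a fixed point and $u\neq v$ the distinguished index, the ordered products prescribed by Definition \ref{def:rdet} for $\mathbf{M}$ restrict exactly to those for the $(n-1)\times(n-1)$ matrix $\mathbf{M}'$ obtained by deleting row and column $v$; since the omitted factor $1$ is real and central, this bookkeeping is routine, and it yields $\rdet_u\mathbf{M}=\rdet_u\mathbf{M}'$.

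Finally I would identify $\mathbf{M}'$. By construction it agrees with $\mathbf{L}(G)$ on $V\setminus\{v\}$ except that the $(u,u)$ entry has been lowered by $1$, i.e. $\deg_G(u)$ has been replaced by $\deg_G(u)-1=\deg_{G-v}(u)$. Hence $\mathbf{M}'=\mathbf{L}(G-v)$, which is again Hermitian, so $\rdet_u\mathbf{M}'=\det\mathbf{L}(G-v)$. Chaining the equalities gives $\det\mathbf{L}(G)=\det\mathbf{L}(G-v)$, completing the inductive step; iterating it reduces $G$ to $C$ and yields $\det\mathbf{L}(G)=\det\mathbf{L}(C)$ by the inductive hypothesis together with Lemma \ref{lem:det_C}.
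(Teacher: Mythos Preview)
Your proof is correct and follows essentially the same route as the paper: both perform a left row operation on $\mathbf{L}(G)$ (via Lemma~\ref{lem:row_comb}) to kill the pendant entry, identify the resulting principal minor with $\det\mathbf{L}(G-v)$, and iterate until only the cycle $C$ remains. Your Laplace-type reduction along column~$v$ makes explicit a step the paper merely asserts, and the closing appeal to Lemma~\ref{lem:det_C} is superfluous (the inductive hypothesis alone finishes) but harmless.
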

\begin{proof}If a unicyclic $U(\mathbb{H})$-gain graph $G$ does not contain no pendant vertices, then all vertices belong to a cycle, and Eq. \eqref{eq:det_unc} is evident. Suppose that  $G$ contains a pendant vertex $v$.
Without loss of generality, let this vertex  $v_1$ and its unique neighbor vertex  correspond the
first two rows and columns of $L(G)$ such that $l_{11}=1$ and $l_{12}=\overline{l_{21}}$ are corresponding gains on  $e_{12}=v_1v_2$ and  $e_{21}$. By left multiplying the first row by $-l_{21}$ and adding it to the second row,  we obtain a new  matrix  $\mathbf{L}'(G)$ with $l'_{21}=0$ and $l'_{22}=l_{22}-l_{12}l_{21}=l_{22}-1$. The principal submatrix of $\mathbf{L}'(G)$ by deleting the first row and the first column equals the Laplacian matrix $\mathbf{L}(G-v_1)$ of the graph $G-v_1$ obtained from $G$ by  separation the edge $e_{12}=v_1v_2$. It's evident that $\mathbf{L}(G-v_1)$ is Hermitian. Since, $l_{11}=1$, then by Lemma \ref{lem:col_comb},
$$
\det \mathbf{L}(G)=\rdet_2 \mathbf{L}(G)_{2.}(\mathbf{l}_{2.}-l_{12}\mathbf{l}_{1.})=\det \mathbf{L}(G-v_1).
$$
Further, if the  vertex $v_2$  turns out as  pendant in $\mathbf{L}(G-v_1)$, we will repeat the previous  procedure, and by finite number of steps we will come to a vertex $v_k$  of the cycle such that
 $$\det \mathbf{L}(G)=\det \mathbf{L}(G-v_1-\cdots-v_{k-1})=\det  \mathbf{L}(C).$$
\end{proof}

Let $G = (\Gamma, \varphi)$ be a $U(\mathbb{H})$-gain graph with vertex set $\Gamma(V) = \{v_1, v_2,\ldots,v_n\}$ and edge set
 $\Gamma({E}) = \{e_1, e_2,\ldots,e_m\}$. For any $v\in \Gamma(V)$ and $e \in \Gamma({E})$, we call that $v$ and $e$ are \emph{incident} if the $(v, e)$-entry of $\mathbf{H}(G)$ is not equal to 0.
As usual, $e \in \Gamma({E})$ is exactly incident to two vertices in $\Gamma(V)$, because $e$ is considered as an edge of $\Gamma$. If $e$ is incident  only to one vertex $v$ in $\Gamma(V)$, then $e$ is called a \emph{half-edge} located at $v$. If $e$ is not incident to any vertex in $\Gamma(V)$,
 $e$ is called a \emph{free loop} of $\Gamma$.

Let $\mathbf{H}(R)$ be submatrix  of $\mathbf{H}(G)$. A \emph{reduction}
$R$ of $G$ that  corresponds to the submatrix $H(R)$ is defined to a triple $(V(R), E(R), \varphi(R))$, where $V(R)$ and $E(R)$ index the rows and columns of
$\mathbf{H}(R)$, respectively, and  $\varphi(R)$ is the restriction of $\varphi$ on $E(R)$.
 A reduction $R$ of $G$ can be considered as a graph, if $R$ does not contain  free loops but half-edges are allowed. Especially, a half-edge tree is a reduction by deleting a pendent vertex of a tree and without deleting the
edge incident to it, and preserving the gain of such an edge.

By $|S|$ we denote the cardinal of the set $S$.

\begin{lem}\label{lem:det_htr} Let $R= (V(R), E(R))$ be a half-edge tree of a $U(\mathbb{H})$-gain graph. If $|V(R)|= |E(R)|$ and  the Laplacian matrix of $R$ is $\mathbf{L}(R)$, then
\begin{align}\label{eq:det_htr}\det \mathbf{L}(R)=1.\end{align}
\end{lem}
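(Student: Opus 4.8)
The plan is to proceed by induction on $n:=|V(R)|=|E(R)|$, peeling pendant vertices in the same spirit as the proof of Lemma~\ref{lem:det_unic}. The base case $n=1$ is immediate: here $R$ is a single vertex carrying a single half-edge, so $\mathbf{H}(R)=(\eta)$ with $|\eta|=1$, and $\mathbf{L}(R)=\mathbf{H}(R)\mathbf{H}(R)^{*}=(\eta\overline{\eta})=(1)$, whence $\det\mathbf{L}(R)=1$.

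For the inductive step I would first exploit the structure of a half-edge tree. Deleting the unique half-edge leaves a spanning tree on the $n$ vertices with $n-1$ ordinary edges, and the half-edge is attached at some vertex $v^{*}$. A tree on $n\ge 2$ vertices has at least two leaves, so there is a leaf $v\neq v^{*}$; such a $v$ is incident in $R$ to exactly one ordinary edge $f=vw$ and to no half-edge, hence has degree $1$ in $R$. Since $\mathbf{L}(R)=\mathbf{H}(R)\mathbf{H}(R)^{*}$ and each nonzero incidence entry is a quaternion unit, the diagonal entry $l_{vv}$ equals the degree of $v$, so $l_{vv}=1$, while the only other nonzero entries in row and column $v$ are $l_{vw}=-\varphi(e_{vw})$ and $l_{wv}=\overline{l_{vw}}$.

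Next I would eliminate $v$ exactly as in Lemma~\ref{lem:det_unic}: adding the left multiple $-l_{wv}\,\mathbf{l}_{v.}$ of row $v$ to row $w$, which preserves the Hermitian determinant by Lemma~\ref{lem:row_comb}. Because $l_{vv}=1$ this clears $l_{wv}$, leaving column $v$ with its single nonzero (diagonal) entry, and it replaces $l_{ww}$ by $l_{ww}-l_{wv}l_{vw}=l_{ww}-1$; all cross terms $l_{wv}l_{vu}$ with $u\neq w$ vanish since $v$ is adjacent only to $w$. Expanding along the sparse column $v$ (as in Lemma~\ref{lem:det_unic}) then gives $\det\mathbf{L}(R)=\det\mathbf{L}(R-v)$, where $R-v$ is obtained by deleting $v$ and $f$. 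The point is that $R-v$ retains its half-edge, is again connected, and satisfies $|V|=|E|=n-1$, so it is a half-edge tree obeying the hypotheses; moreover $l'_{ww}=l_{ww}-1=\deg_{R-v}(w)$, so the diagonal is correctly reinterpreted as a degree in the smaller graph. The induction hypothesis then yields $\det\mathbf{L}(R-v)=1$, and hence $\det\mathbf{L}(R)=1$.

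I expect the main obstacle to be the bookkeeping around the half-edge rather than the linear algebra: I must choose the peeled leaf away from $v^{*}$ so that no elimination step ever destroys the half-edge, and I must verify at each stage that the reduced object is genuinely a half-edge tree with $|V|=|E|$ (so that the reappearing diagonal value $1$ is a degree, not a coincidence). A secondary point of care is the order of the noncommutative elementary operations: over $\mathbb{H}$ only left combinations of rows and right combinations of columns leave the Hermitian determinant invariant (Lemmas~\ref{lem:row_comb} and~\ref{lem:col_comb}), which is precisely the feature that separates this argument from the commutative treatment in~\cite{y_wang1}.
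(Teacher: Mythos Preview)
Your proof is correct and takes a genuinely different route from the paper. You argue by induction on $|V(R)|$, peeling a pendant leaf $v\neq v^{*}$ at each step and invoking Lemma~\ref{lem:row_comb} exactly as in the proof of Lemma~\ref{lem:det_unic} to obtain $\det\mathbf{L}(R)=\det\mathbf{L}(R-v)$. The paper, by contrast, does not use induction: it asserts that a half-edge tree with $|V(R)|=|E(R)|$ has $\deg(v_j)\le 2$ for all $j$ (so the underlying tree is a path), orders the vertices along this path so that $\mathbf{H}(R)$ is upper triangular and $\mathbf{L}(R)$ is tridiagonal with diagonal $(2,\ldots,2,1)$, and then evaluates $\rdet_1\mathbf{L}(R)$ directly as an explicit signed binomial sum that collapses to $1$. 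Your argument has the advantage of covering arbitrary half-edge trees (the path hypothesis in the paper's write-up is not satisfied when the underlying tree branches), and it recycles exactly the noncommutative row-operation machinery already developed for Lemma~\ref{lem:det_unic}; the paper's approach, on the other hand, yields a closed-form combinatorial identity for the tridiagonal case and makes the value $1$ emerge from a concrete sum rather than from an inductive collapse. The only point worth tightening in your write-up is the phrase ``expanding along the sparse column $v$'': since row-column determinants do not admit the usual Laplace expansion, you should say explicitly (as the paper does for Lemma~\ref{lem:det_unic}) that after the row operation the column indexed by $v$ has $l'_{vv}=1$ as its sole nonzero entry, so every term of $\rdet_{w}$ in which $v$ is not a fixed point vanishes, and the surviving terms reproduce $\rdet_{w}$ of the Hermitian principal submatrix $\mathbf{L}(R-v)$.
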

\begin{proof}
From $|V(R)|= |E(R)|$ and  that $R= (V(R), E(R))$ contains a half-edge, it follows that  $\deg(v_j)\leq 2$ and $\exists! v_j\in V(R)$ such that $\deg(v_j)=1$, i.e.  $R$ contains a pendant vertex. Especially,  a pendant vertex and  a half-edge are on other sides on a tree. Let $|V(R)|= |E(R)|=n$. Without loss of generality, we put $v_1$ as  this unique  pendant vertex and $e_n$ by the half-edge. Then it's evident that the  incidence matrix $\mathbf{H}(G) = (\eta_{ve})$ is an upper triangular matrix with $\eta_{ii}=1$ for all $i=1,\ldots,n$ and $\eta_{i,i+1}=-\varphi(e_{i,i+1})$ for all $i = 1,\ldots,n-1$, and
$$\rdet_i \mathbf{H}(G)=\cdet_i \mathbf{H}(G)=1.$$

Similarly, for the Laplacian matrix  $\mathbf{L}(G)=(l_{ij})$, we have $l_{ii}=2$, $l_{i,i+1}=-\varphi(e_{i,i+1})$ and $l_{i+1,i}=-\overline{\varphi(e_{i,i+1}})$ for all $i = 1,\ldots,n-1$, and $l_{nn}=1$.

We put $\det \mathbf{L}(G)=\rdet_1 \mathbf{L}(G)$ and will be calculate it by Definition \ref{def:rdet}. Similarly as in the proof of Lemma \ref{lem:det_C},  we obtain the following kinds  of terms and their sums in $\rdet_1 \mathbf{L}(G)$ regarding  to  a quantity $k$ of cycles of a second order in a term of a ${\mathfrak{R}}$-determinant.
\begin{align*}k=0,&~L_1=l_{11}l_{22}\ldots l_{nn}=2^{n-1}=(-1){{n}\choose{1}}2^{n-3}=-n2^{n-3},\\
  k=2,&~L_3=(-1)^2\left[2(n-3)+{{n-3}\choose{2}}\right]2^{n-5}=\frac{n-3}{2}n2^{n-5},\\
 \end{align*}
We have
 \begin{align*}
  &~L_{k+1}=(-1)^k\frac{(n-k-1)!}{k!(n-2k)!}n2^{n-2k},~~~{\rm ~for~ any}~~~k\leq\left[\frac{n}{2}\right].
 \end{align*}
Because of
\begin{align*}
\det \mathbf{L}(G)=\rdet_1 \mathbf{L}(G)=\sum_{k=0}^m {(-1)^k\frac{(n-k-1)!}{k!(n-2k)!}n2^{n-2k-1}}=1,
\end{align*}
where $m=\left[\frac{n}{2}\right]$ is the integer part of $n$, Eq. \eqref{eq:det_htr} holds.
\end{proof}
Given a $U(\mathbb{H})$-gain graph
$G$, a maximal connected subgraph of $G$ is called a \emph{component} of $G$. Each subgraph of a gain graph is also referred as a gain graph.
If each component of a reduction  $R\subseteq G$  has an equal number of vertices and edges, then we say that the reduction  $R$ is \emph{unicycle-like}. Therefore, if $G_1$ is a component of a unicycle-like reduction of $G$, then we have two cases, $G_1$ is either
unicyclic or a half-edge tree. If  $G_1$ is a unicyclic graph with the unique cycle $C$, then from Lemma \ref{lem:det_unic}, $\det \mathbf{L}(G_1) = \det \mathbf{L}(C)$.
If $G_1$ is a half-edge tree, then by Lemma \ref{lem:det_htr}, $\det \mathbf{L}(G_1) = 1$.

As usual, for  any pair of matrices $\mathbf{A}$ of size $m \times n$ and $\mathbf{B}$ of size $p \times q$, the direct sum of $\mathbf{A}$ and $\mathbf{B}$ is a matrix of size $(m + p) \times (n + q)$ defined as
$${\mathbf{A}}\bigoplus \mathbf{B}=\begin{bmatrix}\mathbf{A}&0\\0& \mathbf{B}\end{bmatrix}.$$
\begin{lem}\label{lem:det_R}
Let $R = (V(R), E(R))$ be a reduction of a given  $U(\mathbb{H})$-gain graph with $|V(R)| = |E(R)|$. If $R$ is not a unicycle-like
reduction, then $\det \mathbf{L}(R) = 0$. Suppose that $R$ is unicycle-like, then $\det \mathbf{L}(R) = 0$ when any one of the components is a
balanced unicyclic graph; otherwise
\begin{align*}
\det \mathbf{L}(R)=\prod_S \det \mathbf{L}(S)=\prod_C \det \mathbf{L}(C)=\prod_C |1-\varphi(C)|^2,
\end{align*}
where $S$ is taken over all components of $R$, and $C$ is taken over all cycles in $R$.
\end{lem}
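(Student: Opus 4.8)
The plan is to reduce the statement to the behaviour of the Hermitian determinant on the connected components of $R$ and then to feed the component-level lemmas already proved into a single multiplicativity identity. Since $R$ has no edge joining two distinct components, after reordering the vertices so that the vertices of each component are consecutive the Laplacian splits as a direct sum
\[
\mathbf{L}(R)=\bigoplus_S \mathbf{L}(S),
\]
the sum running over the components $S$ of $R$; this reordering is a simultaneous permutation of rows and columns and hence does not change $\det \mathbf{L}(R)$. The first step is to record that $\det$ is multiplicative over such direct sums. I would argue this spectrally: each $\mathbf{L}(S)$ is Hermitian, so it admits a unitary diagonalization with real eigenvalues, and the direct sum of these diagonalizations is a unitary diagonalization of $\mathbf{L}(R)$ whose (real) eigenvalues are exactly those of the blocks taken together. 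Since the determinant of a Hermitian matrix is the product of its eigenvalues, it follows that $\det \mathbf{L}(R)=\prod_S \det \mathbf{L}(S)$. Thus it suffices to identify the components and to compute $\det \mathbf{L}(S)$ for each.

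Next I would run the edge count that separates the two regimes of the statement. Any connected component $S$ needs at least $|V(S)|-1$ ordinary edges to be connected, so $|E(S)|\ge |V(S)|-1$, with equality precisely when $S$ is an ordinary tree (no half-edges and no extra edge). Hence every summand of $\sum_S\bigl(|V(S)|-|E(S)|\bigr)=0$ is at most $1$. If $R$ is not unicycle-like, some summand is nonzero; as the summands total $0$ and none exceeds $1$, at least one of them equals $1$, i.e. some component is a tree. For such a component Lemma \ref{lem:det_T} gives $\det \mathbf{L}(S)=0$, and by multiplicativity $\det \mathbf{L}(R)=0$, which is the first assertion.

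It remains to treat the unicycle-like case, in which every component obeys $|V(S)|=|E(S)|$ and is therefore either a half-edge tree or a unicyclic graph, as observed just before the lemma. If some component $S$ is a balanced unicyclic graph with cycle $C$, then Lemma \ref{lem:det_unic} together with the Corollary above give $\det \mathbf{L}(S)=\det \mathbf{L}(C)=0$, whence $\det \mathbf{L}(R)=0$. Otherwise, each half-edge-tree component contributes the factor $1$ by Lemma \ref{lem:det_htr}, while each unicyclic component $S$ contributes $\det \mathbf{L}(S)=\det \mathbf{L}(C)=|1-\varphi(C)|^2$ by Lemma \ref{lem:det_unic} and Lemma \ref{lem:det_C}, where $C$ is the unique cycle of $S$. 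Forming $\prod_S \det \mathbf{L}(S)$, the unit factors coming from half-edge trees disappear and the surviving factors are indexed bijectively by the cycles of $R$, giving
\[
\det \mathbf{L}(R)=\prod_S \det \mathbf{L}(S)=\prod_C \det \mathbf{L}(C)=\prod_C |1-\varphi(C)|^2,
\]
as claimed.

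The one genuinely delicate point is the multiplicativity $\det\bigl(\bigoplus_S \mathbf{L}(S)\bigr)=\prod_S \det \mathbf{L}(S)$; over $\mathbb{H}$ this cannot be taken for granted, since in the row determinant of Definition \ref{def:rdet} the factors within a term are written in a fixed, noncommuting order. The spectral route above sidesteps this, but one could instead argue directly from the definition: in a block-diagonal Hermitian matrix no permutation cycle can use indices from two different blocks, and for a Hermitian matrix the order of the disjoint cycles in a term is immaterial, so the cycles may be grouped block by block and each term factors accordingly. Everything else in the proof is either one of the cited component-level lemmas or the elementary inequality $|E(S)|\ge |V(S)|-1$.
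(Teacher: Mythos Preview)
Your proof is correct and follows essentially the same route as the paper: decompose $\mathbf{L}(R)$ as a direct sum over components, use multiplicativity of the Hermitian determinant on the blocks, and then feed in Lemmas~\ref{lem:det_T}, \ref{lem:det_C}, \ref{lem:det_unic}, and \ref{lem:det_htr} at the component level. You are somewhat more careful than the paper in two places---justifying the block multiplicativity (the paper simply writes $\rdet_1\bigoplus_i\mathbf{L}(S_i)=\prod_i\rdet_1\mathbf{L}(S_i)$ without comment) and running the pigeonhole on $\sum_S(|V(S)|-|E(S)|)=0$ to locate a tree component---but the underlying strategy is the same.
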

\begin{proof}Note that if $|V(S)|<|E(S)|$ for any component $S\subseteq R$, then similarly as in Lemma \ref{lem:det_T}, $\det \mathbf{L}(S)=0$. If $|V(S_1)|>|E(S_1)|$, where $S_1\subset R$, then $S_2\subset R$ such that $|V(S_2)|<|E(S_2)|$ and $\det \mathbf{L}(S_2)=0$. Suppose that $S_1,S_2,\ldots,S_k$ is all unicyclic components of $R$, whose are not balance, and $|V(S_i)|=|E(S_i)|$ for all $i=1,\ldots,k$. Note that some of $S_i\subseteq C$  are unicyclic and others are half-edge trees. Let $\mathbf{L}(R)=\bigoplus_{i=1}^k \mathbf{L}(S_i)$. Since $\mathbf{L}(S_i)$ is Hermitian for all $i=1,\ldots,k$, then by Lemmas \ref{lem:det_unic} and  \ref{lem:det_htr}
\begin{align*}
\det \mathbf{L}(R)=\rdet_1  \mathbf{L}(R)=\prod_{i=1}^k\rdet_1  \mathbf{L}(S_i)=\prod_{i=1}^k\det  L(S_i)=\prod_C |1-\varphi(C)|^2.
\end{align*}

\end{proof}
The next theorem is regarding the determinant  of a Laplacian matrix of an arbitrary $U(\mathbb{H})$-gain graph.
\begin{thm} \label{thm:fin} Let $G$ be a $U(\mathbb{H})$-gain graph and $\mathbf{L}(G)$ be its Laplacian matrix. Then
\begin{align}\label{eq:det_G}
\det \mathbf{L}(G)=\sum_R\prod_S \det \mathbf{L}(S)=\sum_R\prod_C \det \mathbf{L}(C)=\prod_C |1-\varphi(C)|^2,
\end{align}
where  the sum is taken over all unicycle-like reductions $R$ of $G$, $S$ is taken over all components of $R$, and $C$ is taken over all cycles
in $R$.
\end{thm}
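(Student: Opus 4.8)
The plan is to start from the factorization $\mathbf{L}(G)=\mathbf{H}(G)\mathbf{H}(G)^*$ recalled after Definition \ref{def:lap_matr} and to expand the left-hand determinant by a quaternionic analogue of the Cauchy--Binet formula, which I would extract directly from Lemma \ref{lem:eqdet}. Writing $\mathbf{H}=\mathbf{H}(G)\in\mathbb{H}^{n\times m}$, the matrix $\mathbf{H}\mathbf{H}^*=\mathbf{L}(G)$ is $n\times n$ while $\mathbf{H}^*\mathbf{H}$ is $m\times m$, and the whole idea is that the single top principal minor on the smaller side is exactly $\det\mathbf{L}(G)$, whereas the top principal minors on the larger side unpack into the Laplacian determinants of the spanning reductions of $G$.

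Concretely, I would first treat the generic case $\rank\mathbf{H}=n$. Applying Lemma \ref{lem:eqdet} to $\mathbf{A}=\mathbf{H}$ with $s=n$ gives
\begin{align*}
\sum_{\alpha\in I_{n,m}}\det\bigl((\mathbf{H}^*\mathbf{H})_\alpha^\alpha\bigr)=\sum_{\beta\in I_{n,n}}\det\bigl((\mathbf{H}\mathbf{H}^*)_\beta^\beta\bigr)=\det\mathbf{L}(G),
\end{align*}
because $I_{n,n}$ contains only the index set $\{1,\ldots,n\}$. For each $n$-subset $\alpha\subseteq\{1,\ldots,m\}$ I would observe that $(\mathbf{H}^*\mathbf{H})_\alpha^\alpha=\mathbf{H}_\alpha^*\mathbf{H}_\alpha$, where $\mathbf{H}_\alpha$ is the $n\times n$ submatrix of $\mathbf{H}$ whose columns are indexed by $\alpha$; since $\mathbf{H}_\alpha$ is square, Lemma \ref{lem:eqdet} (now on an $n\times n$ matrix) yields $\det(\mathbf{H}_\alpha^*\mathbf{H}_\alpha)=\det(\mathbf{H}_\alpha\mathbf{H}_\alpha^*)=\det\mathbf{L}(R_\alpha)$, where $R_\alpha$ is the spanning reduction with vertex set $V(G)$ and edge set $\alpha$. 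Hence $\det\mathbf{L}(G)=\sum_{R}\det\mathbf{L}(R)$, the sum running over all reductions $R$ with $V(R)=V(G)$ and $|E(R)|=n$.

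It then remains to evaluate each summand by Lemma \ref{lem:det_R}: one has $\det\mathbf{L}(R)=0$ unless $R$ is unicycle-like and no component is a balanced unicyclic graph, in which case $\det\mathbf{L}(R)=\prod_S\det\mathbf{L}(S)=\prod_C|1-\varphi(C)|^2$. Discarding the vanishing terms collapses the sum to the three claimed expressions. The degenerate case $\rank\mathbf{H}=r<n$ I would dispose of separately: then $\mathbf{L}(G)$ has a zero eigenvalue, so $\det\mathbf{L}(G)=0$ by Lemma \ref{lem:semidef}, while every $\mathbf{H}_\alpha$ has rank at most $r<n$ and is therefore singular, so each $\det\mathbf{L}(R_\alpha)=0$ as well and both sides vanish, keeping the identity valid.

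I expect the main obstacle to be precisely the quaternionic Cauchy--Binet step, that is, justifying the transfer $\sum_{\alpha}\det\bigl((\mathbf{H}^*\mathbf{H})_\alpha^\alpha\bigr)=\det(\mathbf{H}\mathbf{H}^*)$ in the absence of commutativity. The delicate points are that $\det(\mathbf{H}_\alpha^*\mathbf{H}_\alpha)=\det(\mathbf{H}_\alpha\mathbf{H}_\alpha^*)$ for a square quaternion factor, and that the size-$n$ minor read off on each side matches the Laplacian determinant of a reduction. Both facts I would purchase from Lemma \ref{lem:eqdet} together with the eigenvalue coincidence of Lemma \ref{lem:semidef}; once that identity is secured, the remainder is bookkeeping with Lemma \ref{lem:det_R}.
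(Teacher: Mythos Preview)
Your proposal is correct and follows essentially the same route as the paper: apply Lemma~\ref{lem:eqdet} with $s=n$ to swap the single $n\times n$ minor $\det(\mathbf{H}\mathbf{H}^*)=\det\mathbf{L}(G)$ for the sum $\sum_{\alpha\in I_{n,m}}\det\bigl((\mathbf{H}^*\mathbf{H})_\alpha^\alpha\bigr)$, identify each summand with $\det\mathbf{L}(R_\alpha)$ for the spanning reduction $R_\alpha$, and then invoke Lemma~\ref{lem:det_R}. The paper uses Lemma~\ref{lem:crit_nonsing} rather than a second application of Lemma~\ref{lem:eqdet} to pass from $\det(\mathbf{H}_\alpha^*\mathbf{H}_\alpha)$ to $\det(\mathbf{H}_\alpha\mathbf{H}_\alpha^*)$, but this is cosmetic; on the other hand your explicit treatment of the rank-deficient case $\rank\mathbf{H}<n$ is a genuine improvement, since Lemma~\ref{lem:eqdet} is only stated for $s\le\rank\mathbf{A}$ and the paper's proof tacitly assumes this hypothesis is met.
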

\begin{proof}Consider an arbitrary reduction $R\subseteq G$ having $|V(G)|$ vertices of $G$, i.e. $|V(R)|=|V(G)|$ and $|V(R)|=|E(R)|$ by definition of a reduction. It is evidently that   $|E(R)|\leq|E(G)|$,  otherwise a reduction $R$ should contain free loops. If $|E(R)|=|E(G)|$, then such reduction  $R$ is unique in  $G$ and \eqref{eq:det_G} holds due to Lemma \ref{lem:det_R}.

Let   $|E(R)|<|E(G)|$, and put $|E(R)|=n$ and $|E(G)|=m$.
 Then  $\mathbf{H}(R)$ is a $(n\times n)$-submatrix of $\mathbf{H}(G)\in \mathbb{H}^{n\times m}$, $\mathbf{H}(R)^*$ is a corresponding submatrix of $\mathbf{H}(G)^*\in \mathbb{H}^{m\times n}$, and $\mathbf{L}(R)=\mathbf{H}(R)\mathbf{H}(R)^*\in \mathbb{H}^{n\times n}$. Denote the matrix $\widetilde{\mathbf{L}(R)}=\mathbf{H}(R)^*\mathbf{H}(R)\in \mathbb{H}^{n\times n}$ that  is a principal submatrix of $\widetilde{\mathbf{L}(G)}=\mathbf{H}(G)^*\mathbf{H}(G)\in \mathbb{H}^{m\times m}$ for any reduction $R$. By Lemma \ref{lem:eqdet},
 $$\det \mathbf{L}(G)= \sum_{\alpha\in I_{n,m}} \det (\widetilde{\mathbf{L}(G)})_\alpha^\alpha,
   $$
   where
   $  (\widetilde{\mathbf{L}(G)})_\alpha^\alpha $ is  a principal submatrix of $\widetilde{\mathbf{L}(G)}$
   whose rows and columns are indexed by $\alpha : = \left\{
{\alpha _{1},\ldots,\alpha _{n}} \right\} \subseteq {\left\{
{1,\ldots ,m} \right\}}$, and
 $I_{n,
m}: = {\left\{ {\alpha :\,\, 1 \le \alpha_{1} < \cdots< \alpha_{n} \le m} \right\}}.
$
For any  reduction $R$ of $G$ with  $|V(R)|=|V(G)|$, we have that $\widetilde{\mathbf{L}(R)}=  (\widetilde{\mathbf{L}(G)})_\alpha^\alpha $ for some $\alpha\in I_{n,m}$.
Since by Lemma \ref{lem:crit_nonsing} we have  $\det \widetilde{\mathbf{L}(R)}=\det \mathbf{L}(R)$, then by summing along taking all reductions $R$ of $G$, we get
$$\det \mathbf{L}(G)= \sum_{\alpha\in I_{n,m}} \det (\widetilde{\mathbf{L}(G)})_\alpha^\alpha=\sum_R\det \widetilde{\mathbf{L}(R)}=\sum_R\det \mathbf{L}(R).
   $$
 Since each such reduction $R$ is a unicycle-like reduction of $G$, then  Lemma \ref{lem:det_R} evidently gives  $\det \mathbf{L}(R)= \prod_S \det \mathbf{L}(S)$, where $S$ is taken over all components contained in $R$. Taking into account Lemma \ref{lem:det_R} again, from this it follows \eqref{eq:det_G}.
\end{proof}
\begin{cor} Let $G$ be a $U(\mathbb{H})$-gain  graph with gains in the set $\{\pm 1, \pm \mathbf{i},\pm \mathbf{j},\pm \mathbf{k}\}$. Then for its  Laplacian matrix $\mathbf{L}(G)$, we have
$$
\det  \mathbf{L}(G)=\sum_R 4^{\omega_1}\times 2^{\omega_2},
$$
where the sum is taken over all unicycle-like reductions $R\in G$, ${\omega_1}$ and ${\omega_2}$ are the numbers of real unbalanced and
imaginary unbalanced cycles contained in  $R$, respectively.
\end{cor}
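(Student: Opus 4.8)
The plan is to obtain this as a direct specialization of Theorem~\ref{thm:fin}, the only extra ingredient being the closure of the admissible gain set under quaternion multiplication. First I would observe that $\{\pm 1,\pm\mathbf{i},\pm\mathbf{j},\pm\mathbf{k}\}$ is precisely the quaternion group of order eight, hence closed under products. Since the gain $\varphi(C)$ of any cycle $C$ is a product of edge gains, each lying in this set, it follows that $\varphi(C)\in\{\pm 1,\pm\mathbf{i},\pm\mathbf{j},\pm\mathbf{k}\}$ for every cycle $C$ appearing in every reduction $R$ of $G$. Consequently each such cycle falls into exactly one of three types: balanced ($\varphi(C)=1$), real unbalanced ($\varphi(C)=-1$), or imaginary unbalanced ($\varphi(C)=\pm i_s$ with $i_s\in\{\mathbf{i},\mathbf{j},\mathbf{k}\}$), and no other value of $\varphi(C)$ can occur.

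Next I would record $\det\mathbf{L}(C)=|1-\varphi(C)|^2$ in each case, as already computed in the text: it equals $0$ for a balanced cycle, $|1-(-1)|^2=4$ for a real unbalanced cycle, and $|1\mp i_s|^2=2$ for an imaginary unbalanced one. Feeding these values into the formula $\det\mathbf{L}(G)=\sum_R\prod_C\det\mathbf{L}(C)$ of Theorem~\ref{thm:fin}, I would then evaluate the inner product reduction by reduction. If a reduction $R$ contains a balanced cycle, the corresponding factor is $0$ and the whole product vanishes, so such $R$ contribute nothing. If every cycle of $R$ is unbalanced, the product factorizes as a product of $4$'s over the real unbalanced cycles and $2$'s over the imaginary unbalanced ones, i.e.\ $\prod_C\det\mathbf{L}(C)=4^{\omega_1}\cdot 2^{\omega_2}$, where $\omega_1$ and $\omega_2$ count the real and imaginary unbalanced cycles of $R$. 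Summing the surviving contributions gives $\det\mathbf{L}(G)=\sum_R 4^{\omega_1}\times 2^{\omega_2}$.

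The step requiring the most care is the bookkeeping of reductions whose cycles include a balanced one. These contribute $0$ to the determinant, whereas the bare expression $4^{\omega_1}2^{\omega_2}$ (which sees only unbalanced cycles) would not vanish for them. I would therefore make explicit that the sum over \emph{all} unicycle-like reductions is effectively a sum over those reductions in which every cycle is unbalanced, the remaining reductions being silent. Once this convention is stated, the corollary is immediate from Theorem~\ref{thm:fin}; there is no genuine difficulty beyond this accounting, the result being a pure specialization of the main theorem to the gain group $\{\pm 1,\pm\mathbf{i},\pm\mathbf{j},\pm\mathbf{k}\}$.
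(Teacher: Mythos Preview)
Your proposal is correct and follows essentially the same approach as the paper, which simply states that the result ``follows immediately from Theorem~\ref{thm:fin} and definitions of real and imaginary unbalanced cycles.'' You supply strictly more detail than the paper does---notably the closure of $\{\pm 1,\pm\mathbf{i},\pm\mathbf{j},\pm\mathbf{k}\}$ under multiplication (so that every cycle gain lands in this set) and the bookkeeping for reductions containing a balanced cycle---both of which the paper leaves implicit.
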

\begin{proof}The proof follows immediately from Theorem \ref{thm:fin} and definitions of real and
imaginary unbalanced cycles.
\end{proof}
\begin{thm}Let $G$ be a $U(\mathbb{H})$-gain graph and $\mathbf{L}(G)$ be its Laplacian matrix. Then
$$\det \mathbf{L}(G)=0$$
if and only if  $G$ is balanced.
\end{thm}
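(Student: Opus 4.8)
The plan is to read off everything from the closed formula established in Theorem \ref{thm:fin}, namely $\det \mathbf{L}(G)=\sum_{R}\prod_{C}|1-\varphi(C)|^{2}$, where $R$ ranges over the unicycle-like reductions of $G$ with $|V(R)|=|V(G)|$ and $C$ over the cycles of $R$ (using $\det\mathbf{L}(C)=|1-\varphi(C)|^{2}$ from Lemma \ref{lem:det_C}). The single structural fact I would lean on is that each factor $|1-\varphi(C)|^{2}$ is a nonnegative real, so every summand is a product of nonnegative reals and the whole expression is a sum of nonnegative reals. In particular there is no cancellation, and $\det\mathbf{L}(G)=0$ holds precisely when every summand vanishes; a given summand vanishes precisely when its reduction $R$ contains at least one neutral cycle, i.e. a cycle with $\varphi(C)=1$. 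This turns the theorem into a purely combinatorial question about which cycles can be forced into, or kept out of, a spanning unicycle-like reduction.

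For the direction \emph{balanced $\Rightarrow \det=0$} I would argue as follows. If $G$ is balanced then every cycle of $G$ is neutral, and since any cycle appearing in a reduction $R\subseteq G$ is in particular a cycle of $G$, it too is neutral. Now every unicycle-like reduction with $|V(R)|=|V(G)|$ has all its components unicyclic: each component has equally many vertices and edges and, because all vertices are retained, contains both endpoints of each of its edges, hence carries exactly one cycle. Thus each component contributes a neutral cycle, the corresponding factor $|1-\varphi(C)|^{2}$ is $0$, every summand is $0$, and $\det\mathbf{L}(G)=0$. The degenerate case, where $G$ has no cycle at all (a forest), is immediate because then no reduction with $|E(R)|=|V(R)|$ exists and the empty sum is $0$, in agreement with Lemma \ref{lem:det_T}.

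For the converse I would proceed by contraposition: assuming $G$ is unbalanced, I would exhibit one strictly positive summand. Pick an unbalanced cycle $C_{0}$, so that $|1-\varphi(C_{0})|^{2}>0$, and complete $C_{0}$ to a spanning unicyclic subgraph $R_{0}$ whose unique cycle is $C_{0}$: contract $C_{0}$ to a single vertex, choose a spanning tree of the contracted graph, and uncontract, which adjoins to $C_{0}$ a forest reaching every remaining vertex without creating a new cycle. Then $R_{0}$ is a unicycle-like reduction with $|V(R_{0})|=|V(G)|$ whose only cycle is $C_{0}$, so its summand equals $|1-\varphi(C_{0})|^{2}>0$; since all summands are nonnegative, $\det\mathbf{L}(G)>0$, as required.

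The step I expect to be the main obstacle is exactly this last construction, together with the hypothesis it silently needs. Extending $C_{0}$ to a \emph{spanning} unicyclic reduction is only possible when $C_{0}$ can reach every vertex, i.e. when $G$ is connected; if $G$ is disconnected, a tree component forces $\det\mathbf{L}(G)=0$ by Lemma \ref{lem:det_T} regardless of an unbalanced cycle elsewhere, so the statement is most cleanly read with $G$ connected. In the disconnected case one would instead apply the argument componentwise, using $\det\mathbf{L}(G)=\prod_{i}\det\mathbf{L}(G_{i})$ together with the singularity criterion of Lemma \ref{lem:crit_nonsing}. The other point I would state carefully, though it is immediate from $|1-\varphi(C)|^{2}\ge 0$, is the nonnegativity of every summand, since it is precisely this that rules out cancellation and makes producing a single positive term enough to conclude.
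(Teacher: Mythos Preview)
Your argument is correct and shares the same backbone as the paper's: both directions are read off Theorem~\ref{thm:fin} together with the observation that each summand $\prod_{C}|1-\varphi(C)|^{2}$ is a nonnegative real, so the sum vanishes exactly when every summand does. The forward direction is handled identically.

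For the converse the two treatments diverge in style. The paper argues directly: from $\det\mathbf{L}(G)=0$ and positive semi-definiteness of each $\mathbf{H}(R)\mathbf{H}(R)^{*}$ it concludes that every spanning reduction $R$ has $\det\mathbf{L}(R)=0$, hence contains a neutral cycle, and then simply asserts that ``all reductions $R$ of $G$ and $G$ as their union are balanced.'' That final inference is left unjustified (and the sentence preceding it contains a sign slip: it should say that there \emph{does} exist a neutral cycle in each $R$). Your contrapositive supplies exactly the missing combinatorial step: given an unbalanced cycle $C_{0}$, you explicitly build a spanning unicyclic reduction whose unique cycle is $C_{0}$, producing a strictly positive summand. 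This is the cleaner route, and it makes transparent why the argument needs $G$ connected, which you rightly flag. Your observation that a balanced (e.g.\ tree) component forces $\det\mathbf{L}(G)=0$ regardless of what happens elsewhere shows that the biconditional as stated can fail for disconnected $G$; the paper does not address this. Your proposed componentwise reduction via $\det\mathbf{L}(G)=\prod_{i}\det\mathbf{L}(G_{i})$ therefore does not rescue the statement in full generality, but it is the right way to reformulate the result per component.
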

\begin{proof}
The proposition on that  if  $G$  is balanced, then $\det \mathbf{L}(G)=0$ is proven by Theorem \ref{thm:fin}.

Let now $\det \mathbf{L}(G)=0$.  By Theorem \ref{thm:fin},
$\det \mathbf{L}(G)=\sum_R\det \mathbf{L}(R)=\sum_R \det(\mathbf{H}(R)\mathbf{H}(R)^*)=0$, where $R$ is any reduction of $G$ with $|V(R)|=|V(G)|$. Since the Hermitian matrix $\mathbf{H}(R)\mathbf{H}(R)^*$ is semi-definite and $\det \mathbf{L}(R)=\prod_C \det \mathbf{L}(C)$ for all unicyclic subgraphs in $R$, then there does not exist a cycle $C\in R$ such that $\det \mathbf{L}(C)=0$. Hence, all reductions $R$ of $G$ and $G$ as their union are balanced.
\end{proof}

\section{An illustrative example}\label{sec:examp}


\begin{figure}[ht]
\begin{center}
\includegraphics[width=2in]{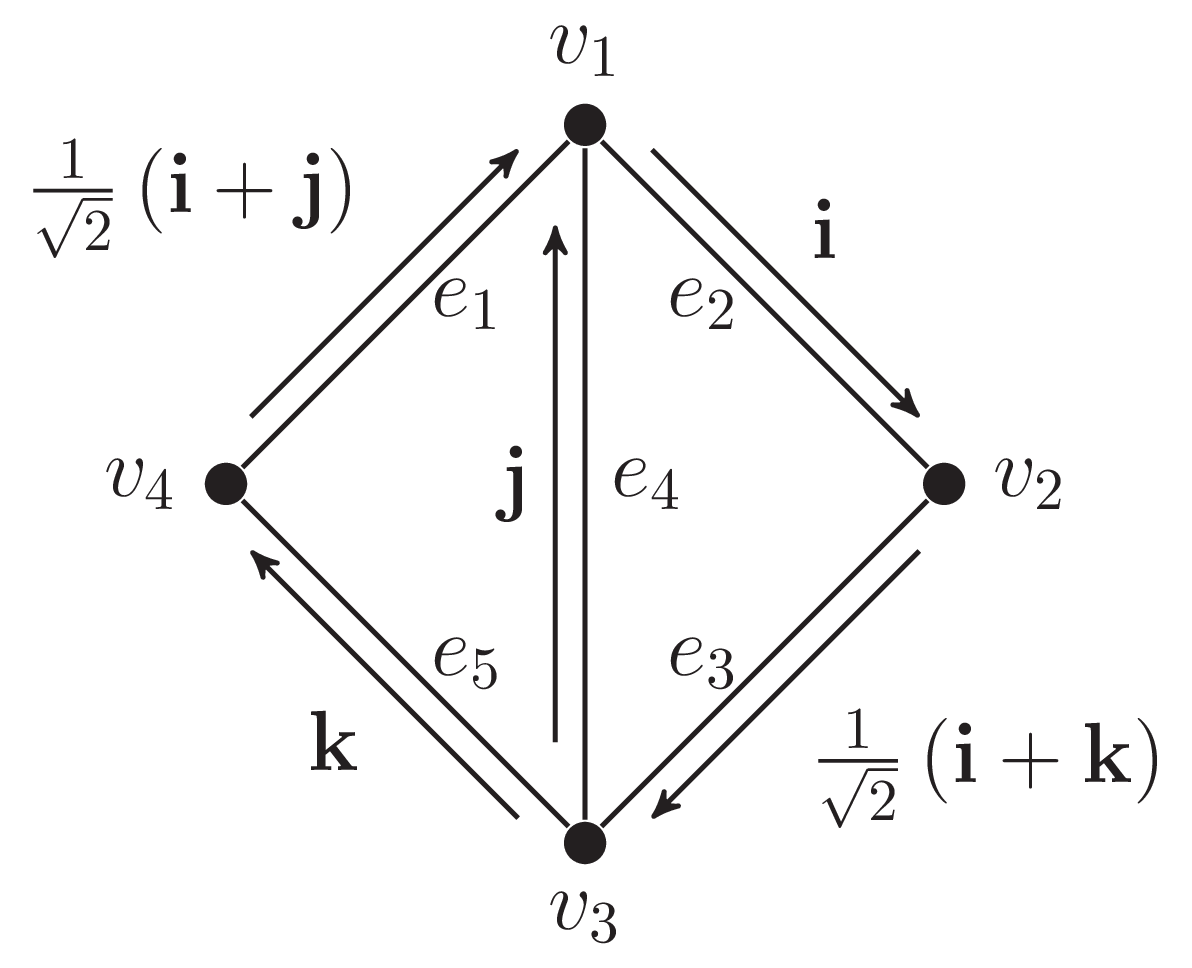}
\caption{A $U(\mathbb{H})$-gain graph G.}
\end{center}
\label{fig2}
\end{figure}
Consider a $U(\mathbb{H})$-gain graph $G$ in Fig.1. We put the gain ${\varphi(e_{41})}=\frac{1}{\sqrt{2}}(\mathbf{i}+\mathbf{j})$ on the edge $e_1$ with the direction  $\overrightarrow{v_4v_1}$. It is clear that the gain of the opposite  direction $\overrightarrow{v_1v_4}$
is ${\varphi(e_{14})}=-\frac{1}{\sqrt{2}}(\mathbf{i}+\mathbf{j})$. The same is for gains of all other oriented edges $e_i$, $i=2,\ldots,5$.
We have the following incidence matrices for the $U(\mathbb{H})$-gain graph $G$,
\begin{align*}
\mathbf{H}(G)=\begin{bmatrix}
1&-\mathbf{i}&0&1&0\\
0&1&-\frac{1}{\sqrt{2}}(\mathbf{i}+\mathbf{k})&0&0\\
0&0&1&-\mathbf{j}&-\mathbf{k}\\
-\frac{1}{\sqrt{2}}(\mathbf{i}+\mathbf{j})&0&0&0&1
\end{bmatrix}.
\end{align*}
Reductions $R$ of $G$ with $|V(R)|=|V(G)|=4$ are determined by all submatrix of forth order of the matrix $\mathbf{H}(G)$.

Especially, the submatrix $\left(\mathbf{H}(G)\right)_{\beta_1}^{\alpha}$ with the sets of row indexes  $\alpha=\{1,2,3,4\}$ and of column indexes  $\beta_1=\{1,2,3,5\}$ corresponds to the reduction $R_1$ that is the cycle $C_1= v_1e_{12}v_2e_{23} v_{3}e_{35} v_1$,  and
$$
\varphi(C_1)=\mathbf{i}\cdot\frac{1}{\sqrt{2}}(\mathbf{i}+\mathbf{k})\cdot\mathbf{k}\cdot\frac{1}{\sqrt{2}}(\mathbf{i}+\mathbf{j})=
0.5+0.5\mathbf{i}-0.5\mathbf{j}-0.5\mathbf{k}.
$$
The submatrices $\left(\mathbf{H}(G)\right)_{\beta_2}^{\alpha}$ and $\left(\mathbf{H}(G)\right)_{\beta_3}^{\alpha}$ with $\beta_2=\{1,2,3,4\}$ and $\beta_3=\{2,3,4,5\}$, respectively, correspond to the reductions $R_2$ and $R_3$ that are unicycle and both contain the cycle $C_2= v_1e_{12}v_2e_{23} v_{3}e_{31} v_1$,  and
$\varphi(C_2)=
\frac{1}{\sqrt{2}}(1-\mathbf{j}).$

Finally, the submatrices $\left(\mathbf{H}(G)\right)_{\beta_4}^{\alpha}$ and $\left(\mathbf{H}(G)\right)_{\beta_5}^{\alpha}$ with $\beta_4=\{1,2,4,5\}$ and $\beta_5=\{1,3,4,5\}$, respectively, correspond to the reductions $R_4$ and $R_5$ that are unicycle and both contain the cycle $C_3= v_1e_{13}v_3e_{34} v_{4}e_{41} v_1$,  and
$\varphi(C_3)=
\frac{1}{\sqrt{2}}(1-\mathbf{k}).$

Then by \eqref{eq:det_G}, \begin{align*}
\det \mathbf{L}(G)=\sum_R \det \mathbf{L}(C)=\sum_{k=1}^3 |1-\varphi(C_k)|^2=9-4\sqrt{2}.
\end{align*}
The same result can be obtained by direct calculation of the determinant of the Laplacian matrix.
Since
\begin{align*}
\mathbf{L}(G)=\mathbf{H}(G)\mathbf{H}(G)^*=\begin{bmatrix}
3&-\mathbf{i}&\mathbf{j}&\frac{1}{\sqrt{2}}(\mathbf{i}+\mathbf{j})\\
\mathbf{i}&2&-\frac{1}{\sqrt{2}}(\mathbf{i}+\mathbf{k})&0\\
-\mathbf{j}&\frac{1}{\sqrt{2}}(\mathbf{i}+\mathbf{k})&3&-\mathbf{k}\\
-\frac{1}{\sqrt{2}}(\mathbf{i}+\mathbf{j})&0&\mathbf{k}&2
\end{bmatrix},
\end{align*}
then for all $i=1,\ldots,4,$ $$\det \mathbf{L}(G)=\rdet_i \mathbf{L}(G)=\cdet_i \mathbf{L}(G)=9-4\sqrt{2}.$$

\section{Conclusion}\label{sec:conc}
In this paper we have extended some properties of matrix representations of a complex unit gain graph to a quaternion one. We have explored matrix representations of  a  quaternion unit gain graph such as the adjacency, Laplacian and incidence matrices. Especially, we provided a combinatorial description of the determinant of the Laplacian matrix. In carrying out this task, we inevitably encounter a problem  of defining a  determinant of a quadratic matrix with noncommutative entries (noncommutative determinant). To solve it, we use the theory of row-column determinants recently developed by one of the authors. We expect that many other results from the  theories of signed and complex unit gain graphs can be generalized to the quaternions settings by this way.

%
%
%
%
%

\begin{thebibliography}{000}
\bibitem{har}F. Harary, On the notion of balanced in a signed graph, Michigan Math. J. 2  (1953) 143-146.
\bibitem{zas1}T. Zaslavsky, Signed graphs, Discrete Appl. Math, 4  (1982) 47-74.

\bibitem{cam}P.J. Cameron, J.J. Seidel, S.V. Tsaranov, Signed graphs, root lattices, and Coxeter groups, J. Algebra 164 (1994) 173-209.

\bibitem{hou1} Y.P. Hou, Bounds for the least laplacian eigenvalue of a signed graph, Acta Math. Sin. (Engl. Ser.) 21(4) (2005) 955-960.
\bibitem{hou2} Y.P. Hou, J.S. Li, Y.L. Pan, On the laplacian eigenvalues of signed graphs, Linear Multilinear Algebra 51(1) (2003) 21-30.
\bibitem{li1} H.H. Li, J.S. Li, An upper bound on the laplacian spectral radius of the signed graphs, Discuss. Math. Graph Theory 28(2) (2008) 345-359.
\bibitem{li2} H.H. Li, J.S. Li, Note on the normalized laplacian eigenvalues of signed graphs, Australas. J. Combin. 44 (2009) 153-162.


 \bibitem{ref1}N. Reff, Spectral properties of complex unit gain graphs, Linear Algebra Appl. 436(9) (2012) 3165-3176. 

     \bibitem{bap1} R.B. Bapat, D. Kalita, S. Pati, On weighted directed graphs, Linear Algebra Appl. 436 (2012) 99-111.
 \bibitem{ref2}
N. Reff, Oriented gain graphs, line graphs and eigenvalues, Linear Algebra Appl. 506 (2016) 316-328.

 \bibitem{ham} K.S. Hameed, K.A. Germina, Balance in gain graphs -- a spectral analysis, Linear Algebra Appl. 436 (5) (2012) 1114-1121.

\bibitem{zas2} T. Zaslavsky, Biased graphs. I. Bias, balance, and gains, J. Combin. Theory Ser. B 47 (1989) 32-52.
\bibitem{zas3} T. Zaslavsky, Biased graphs. II. The three matroids, J. Combin. Theory Ser. B 51 (1991) 46-72.
\bibitem{zas4} T. Zaslavsky, Orientation of signed graphs, European J. Combin. 12 (1991) 361-375.

 \bibitem{y_wang1}Y. Wang, S.-C. Gong, Y.-Z. Fan, On the determinant of the Laplacian
  matrix of a complex unit gain graph, Discrete Math. 341(1) (2018) 81-86.
 \bibitem{as} H. Aslaksen, {Quaternionic determinants}, Math. Intell. 18(3) (1996) 57--65.


\bibitem{coh} N. Cohen, S. De~Leo, {The quaternionic determinant}, Electron. J. Linear Algebra  7 (2000) 100--111.

 \bibitem{zhan} F.Z. ~Zhang,  {Quaternions and matrices of quaternions}, Linear Algebra Appl. 251 (1997) 21--57.

     \bibitem{bel1} F. Belardo, M. Brunetti, N.J. Coble, N. Reff,  H. Skogman, Spectra of quaternion unit gain graphs.
Linear Algebra Appl. 632 (2022) 15-49.


  \bibitem{shou_q}Q. Zhou,  Y. Lu, Relation between the row left rank of a quaternion unit gain graph and the rank of its underlying graph, Electron. J. Linear Algebra 39 (2023)  181-198.

\bibitem{kyr2} I.I. Kyrchei, {Cramer's rule for quaternionic systems of linear equations}, J. Math. Sci. 155(6) (2008) 839--858.
 \bibitem{kyr_laa} I.I. Kyrchei,   Explicit representation formulas for the minimum norm least squares
solutions of some quaternion matrix equations, Linear Algebra Appl. 438(1) (2013)
136-152.

\bibitem{kyr3}  I.I.~Kyrchei, {The theory of the column and row determinants in a quaternion linear algebra}, In: A.R.~Baswell (Ed.), Advances in Mathematics Research 15, pp. 301--359.  Nova Sci. Publ., New York (2012).

\bibitem{kyr4}  I.I.~Kyrchei,  {Determinantal representations of the quaternion weighted Moore-Penrose
inverse and its applications}. In: A. R. Baswell (Ed.): Advances in Mathematics Research
23, pp. 35-96. New York: Nova Sci. Publ., (2017).

\bibitem{di}J. Dieudonne, Les determinantes sur un corps non-commutatif, Bull. Soc. Math. France
71 (1943) 27-45.

\bibitem{dy}F.J. Dyson, Quaternion determinants, Helvetica Phys. Acta 45 (1972) 289-302.

\bibitem{mo}E.H. Moore, On the determinant of an Hermitian matrix of quaternionic elements, Bull.
Amer. Math. Soc. 28 (1922) 161-162.

\end{thebibliography}
\end{document}